\declaretheorem[name=Theorem,numberwithin=section]{thm}
\newtheorem{theorem}[thm]{Theorem}
\newtheorem{proposition}[thm]{Proposition}
\newtheorem{lemma}[thm]{Lemma}
\theoremstyle{definition}
\newtheorem{definition}[thm]{Definition}
\newtheorem{example}[thm]{Example}
\numberwithin{equation}{section}
\newcommand{\qbin}[2]{\genfrac[]{0pt}{}{#1}{#2}}
\newcommand{\bZ}{\mathbb{Z}}
\newcommand{\dP}{\mathds{P}}
\newcommand{\ms}{\mathrm{MSet}}
\newcommand{\cst}{\mathrm{cst}}
\begin{document}

\title{Remixed Eulerian numbers: beyond the connected case}

\author{Solal Gaudin}


\maketitle

\abstract{In his study of generalised permutahedra, Postnikov considered the mixed volumes of hypersimplices, giving rise to the family of mixed Eulerian numbers. It comprises usual Eulerian numbers, binomial coefficients, Catalan numbers, and the large family of hit numbers. Nadeau and Tewari further gave a polynomial refinement of these, the remixed Eulerian numbers, which recover the natural $q$-analogs of these special families. Using a probabilistic model, we give several formulas for remixed Eulerian numbers for certain subfamilies, extending the known formulas for $q$-hit numbers due to Garsia and Remmel.

\section{Introduction}
Eulerian numbers have long been a core part of both enumerative  and algebraic combinatorics. Petersen dedicated a whole book to them \cite{Pet15}. While studying the volumes of the permutahedra, Postnikov defined the family of \textit{mixed Eulerian numbers} $A_c$ \cite{Pos09}. These numbers generalise not only Eulerian numbers, but also binomial coefficients, Catalan numbers, and others; they also satisfy several nice enumerative properties. Postnikov gave a combinatorial interpretation as counting certain weighted trees, while Liu later showed that they also counted certain special permutations \cite{Liu16} (see also~\cite[Sec. 7]{NT22}). 
\medskip

This family has since been expanded in multiple directions. Postnikov already introduced mixed Eulerian numbers in all Lie types; the original ones correspond to type $A$. Horiguchi gave combinatorial interpretation for those in classical types \cite{Hor23}. In 2021, Nadeau and Tewari gave a polynomial $q$-analog of the $A_c$: the \emph{remixed Eulerian numbers} $A_c(q)$ \cite{NT21}. They then showed that most properties of the usual mixed Eulerian numbers extended nicely \cite[Theorem 1.3]{NT22}. Recently, Katz and Kutler defined general \textit{matroidal} mixed Eulerian numbers \cite{KatKut24}: when $q$ is a prime power and the matroid is the projective field over $\mathbb{F}_q$, these coincide with remixed Eulerian numbers. 
\medskip

We are interested here in the remixed Eulerian numbers $A_c(q)$. Here, $c$ is a tuple $(c_i)_{i \in [1;n]}$ of integers $c_i\geq 0$ such that $\sum_i{c_i}= n$. Nadeau and Tewari proved that they are a family of symmetrical, unimodal polynomials in a parameter $q$ \cite{NT22} with nonnegative coefficients. Furthermore, these have been shown in \cite{NT22,Mit20} to be a generalisation of $q$-binomial numbers, of Garsia-Remmel's $q$-hit numbers defined in \cite{GarRem86}, and of $q$-Eulerian numbers. The remixed Eulerian numbers were historically defined algebraically \cite{NT21,BerSpiTse23}. Here we will use the simple probabilistic definition later introduced by Nadeau and Tewari \cite{NT22}.
\bigskip

\noindent{\bf{Probabilistic definition of $A_c(q)$}.} Here we let $q$ be a real non-negative number. Imagine $n$ balls on sites $1,\ldots,n$, and no ball on the other sites of $\bZ$. We can encode this by a configuration $c$ as above, where $c_i$ is the number of balls at size $i$. Pick any ball at a site $i$ with $c_i>1$, and move it left with probability $q/(1+q)$ and right with probability $1/(1+q)$. We then obtain a new configuration $c'$. Repeat this operation on any site with at least two balls, until this is no more such site. Let $\dP_c(q)$ be the probability that all particles are still within $[1;n]$ at this time, that is each site contains one particle. Then $\dP_c(q)$ is well-defined, and we set $A_c(q)=[n]!\dP_c(q)$.

We call \emph{support} the set of $i$ such that $c_i > 0$.
\begin{figure}[!ht]
\label{fg:ex_config}
\centering
    \includegraphics[width=0.35\textwidth]{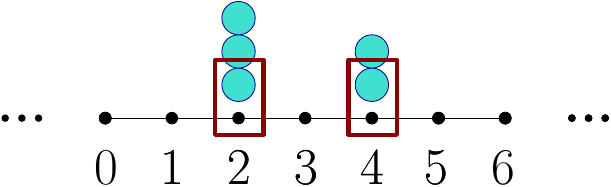}
\caption{An example of configuration having $n = 5$ balls, with its support in full lines.\label{fg:support}}
\end{figure}
\medskip

\noindent{\bf{Results.}}
Previous work on mixed Eulerian numbers has mostly focused on the case where the support is connected \cite{Mit20,KatKut24,NT22}. This case turns out to be closely linked with the study of Garsia and Remmel's $q$-hit numbers \cite{NT22}, see \cite{CMP21} for recent work over these $q$-hit numbers and their relation with chromatic symmetric functions.

In this paper, we give elementary proofs of the known results for mixed Eulerian numbers. The directness of these proofs allows us to extend them beyond connected configurations, in Sections \ref{sec:almost_luka} and \ref{sec:wLuka}. Intuitively, whenever the set of fallen balls stays connected throughout the dynamic, the induction relation \eqref{eq:fin_ind} is simplified. This observation drives the choices of which extended families of configurations we study in Sections \ref{sec:almost_luka} and \ref{sec:wLuka}.
\medskip

We first recall basic properties of this particle model. Thanks to a result of Diaconis and Fulton \cite{DiaFul91}, we know that the remixed Eulerian numbers $A_c(q)$ are well-defined. Nadeau and Tewari further gave them two induction relations detailed in Section \ref{sec:state_art}, and proved explicit formulas for some families of configurations \cite{NT22}.

These two families are the \emph{{\L}ukasiewicz} configurations and \emph{connected} configurations. We dedicate Sections \ref{sec:Luka} and \ref{sec:connected} to them, recalling their strong links with $q$-Eulerian numbers and $q$-hit numbers. We build the rest of this paper on these two families, their specialisations and generalisations.

We link Carlitz and Scoville's generalised Eulerian numbers \cite{CarSco74} to a specialisation of connected configurations in Section \ref{sec:ab-eul}. As such, we are able to give a $q$-analog of these generalised Eulerian numbers. In Section \ref{sec:almost_luka}, we present a variation of {\L}ukasiewicz configurations. We are able to give their remixed Eulerian number a formula, with Theorem \ref{th:alm_luka}.

In Section \ref{sec:wLuka}, we unite connected configurations and {\L}ukasiewicz configurations, with \emph{weakly {\L}ukasiewicz} configurations. For convenience's sake, we define $\ms(c)$ the multiset corresponding to a configuration $c$, having $c_i$ times each element $i \in [1;n]$. One of our main results then is the closed formula for their remixed Eulerian number:
\begin{restatable*}{thm}{thwluka}
\label{th:w_luka}
Let $k$ be the maximal value such that $(0^k,\gamma,0^{n-m-k})$ is a weakly {\L}ukasiewicz configuration. Then:
\begin{equation}
    \sum_{i=0}^{n-m}{t^i A_{(0^i,\gamma,0^{n-m-i})}(q)} \equiv {(t;q)_{n+1}} \sum_{j \geq 0}{t^j \prod_{a \in \ms(\gamma)}{[j+a]}} \qquad \mod t^{k+1}.
\end{equation}
By extracting the coefficient of $t^k$, we get:
\begin{equation}
    A_{(0^k,\gamma,0^{n-m-k})}(q) = \sum_{j = 0}^{k}{\left( (-1)^{j+k}q^{\binom{k-j}{2}}\qbin{n+1}{k-j} \prod_{a \in \ms(\gamma)}{[j+a]} \right) }.
\end{equation}
\end{restatable*}

Finally, in Section \ref{sec:two_connec}, we specialise weakly {\L}ukasiewicz configurations into \emph{one-hole} configurations: configurations with two connected parts, barely apart from one another. The theorem \ref{th:one_hole} gives their remixed Eulerian number, using new techniques for its proof.

\section{A particle model}
\label{sec:state_art}
We give here a full probabilistic definition of the remixed Eulerian numbers, given by Petrov \cite{Pet18}, and later refined by Nadeau and Tewari \cite[Sec. 2.3]{NT22}. We write each remixed Eulerian number as $A_c(q)$. They thus depend on a non-negative real $q$ and on a \emph{configuration} $c$: a tuple $(c_i)_{i \in [1;n]}$ of integers $c_i\geq 0$,  such that $\sum_i{c_i}= n$. We define $\ms(c)$  the corresponding multiset, having $c_i$ times each element $i \in [1;n]$. We call \emph{balls} these $n$ elements, and \emph{support} the set of $i$ such that $c_i > 0$, as in Figure \ref{fg:ex_config}.

The model itself consists of the finite evolution of these $n$ balls, whose set of possible \emph{sites} is the integer line $\bZ$. The balls start over that line, and are dropped onto it one at a time, in a chosen order $u_1,...,u_n$. As an example, for $c = (0,3,0,2,0)$, the multiset of balls is $\{ \! \{ 2,2,2,4,4 \} \! \}$, and we might choose to drop them in the order $2,4,4,2,2$. We say that $u_1,...,u_n$ has \emph{content} $c$.

When a ball lands on top of another ball, it moves one site left (resp. right) with probability $q/(1+q)$ (resp. $1/(1+q)$). We repeat this step until the ball arrives at an empty site: a \emph{hole}.
\begin{figure}[!ht]
\centering
    \includegraphics[width=0.35\textwidth]{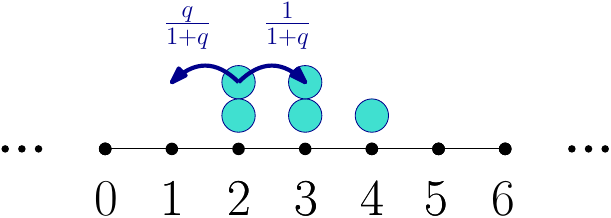}
\caption{Moving a ball one site.\label{fg:ppas}}
\end{figure}

This defines a finite dynamic, of which we wish to know more. We first notice that at the end of the dynamic, no two balls have stopped at the same place. We thus ask: what is the probability $\dP(c \to Supp)_q$ that a given set $Supp$ is the final support, when the balls started in the configuration $c$?

\subsection{First results}
\label{sec:known_basics}
We wish to know the probability that a given set $Supp$ is the final support, when the balls started in the given configuration $c$. A classical result from Diaconis and Fulton \cite{DiaFul91} ensures that these probabilities are indeed  independent from the order in which the balls drop, and thus are well-defined. In other words, the following operations commute with each other and between themselves: "dropping a ball", and "moving it one space -- randomly left or right".
\begin{figure}[!ht]
\centering
    \includegraphics[width=0.35\textwidth]{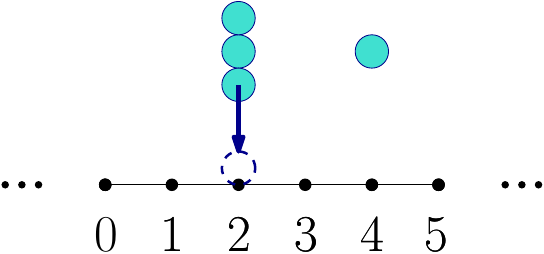}
\caption{A ball, dropping onto $\bZ$.\label{fg:drop}}
\end{figure}

Nadeau and Tewari first studied these probabilities in \cite{NT22}, and proved that we can easily obtain them for any set $Supp$ once we have them for all intervals $[1;i]$. We can thus suppose w.l.o.g. that $Supp = [1;n]$, and we write $\dP(c \to [1;n])_q$ the probability that the dynamic ends with such $Supp = [1;n]$, from the given configuration $c$.

Furthermore, these authors proved \cite[Prop. 5.4]{NT21} that multiplying by a quantity -- only depending on the \emph{number} of balls -- is enough to turn these probabilities from rational fractions to \emph{polynomials} in $q$. This quantity is the $q$-factorial $[n]!$. We also define the $q$-integers and $q$-binomials, for further use:
$$[n]! = \prod_{i = 1}^n{[i]} \qquad ; \qquad [i] := \sum_{k = 0}^{i-1}{q^k} \qquad ; \qquad \qbin{n}{k} = \frac{[n]!}{[k]![n-k]!}.$$

We can now define:
\begin{definition}
Let $c$ be a configuration. Then, the remixed Eulerian number is: $$A_c(q) = [n]! \dP(c \to [1;n])_q.$$
\end{definition}
We will simply write $A_c$ when the choice of $q$ is clear. Conversely, we will always write $A_c(1)$ to talk about regular mixed Eulerian numbers.
\smallskip

We now call the configuration $c+\{j\}$ as the configuration $c$ with one more ball at site $j$ -- that is, $\ms(c+\{j\}) = \ms(c) \cup \{j \}$. Similarly, if $c_j > 0$, we call $c-\{ j \}$ the configuration $c$ with one more fewer ball at site $j$, i.e. $\ms(c-\{j\}) = \ms(c) \backslash \{j \}$.
\smallskip

One can move the balls multiple sites at the time, instead of moving only one space with probability $q/(1+q)$ and $1/(1+q)$. The resulting probabilities are given by the following proposition:
\begin{proposition}[\cite{NT22}, Eq (2.4)]
Take $j \in [1;n]$ such that the nearest hole -- in $c$ -- on its left (resp. right) is at distance $a \geq 0$ (resp. $b \geq 0$). We then have:
\begin{equation}
\label{eq:big_step}
A_{c+\{j\}} = q^a \frac{[b]}{[a+b]}A_{c+\{j-a\}} + \frac{[a]}{[a+b]}A_{c+\{j+b\}}. 
\end{equation}
\end{proposition}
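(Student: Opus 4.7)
The identity is a classical gambler's-ruin computation made rigorous by the Diaconis--Fulton commutativity. The only interesting case is $c_j \geq 1$ (otherwise $a=b=0$ and $[a+b]$ degenerates), so in $c+\{j\}$ the site $j$ houses $c_j+1 \geq 2$ balls. Using Diaconis--Fulton, I choose to first resolve the motion of the newly added ball at $j$, keeping every other ball frozen until this tagged walker stops.

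As long as the walker sits at a site $i$ with $c_i \geq 1$, that site still carries $c_i+1 \geq 2$ balls, so the walker must take another elementary step (left with probability $q/(1+q)$, right with probability $1/(1+q)$). The walker therefore performs an ordinary biased random walk on $\bZ$ starting at $j$, and by definition of $a$ and $b$ it halts exactly upon first reaching $j-a$ or $j+b$, the two nearest holes of $c$. After absorption the configuration has become $c+\{j-a\}$ or $c+\{j+b\}$ respectively, and it remains only to run the residual dynamics --- again justified by Diaconis--Fulton.

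The exit distribution is now a one-line calculation. Letting $f(k)$ denote the probability of exiting at $j+b$ starting from $j+k$ for $k \in \{-a,\ldots,b\}$, the recursion $f(k+1)-f(k) = q\bigl(f(k)-f(k-1)\bigr)$ with boundary values $f(-a)=0$ and $f(b)=1$ gives a geometric progression of differences, hence $f(0) = [a]/[a+b]$; the identity $[a+b] = [a]+q^a[b]$ then yields the complementary probability $q^a[b]/[a+b]$. Conditioning on the exit site produces
$$\dP(c+\{j\} \to [1;n+1])_q \;=\; \frac{[a]}{[a+b]}\,\dP(c+\{j+b\} \to [1;n+1])_q \;+\; q^a\, \frac{[b]}{[a+b]}\,\dP(c+\{j-a\} \to [1;n+1])_q,$$
and multiplying through by $[n+1]!$ gives the stated formula.

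The only non-routine point is conceptual rather than computational: one must justify that the tagged ball may be run in isolation, and that its absorbing set is exactly the pair of nearest holes of $c$ rather than of $c+\{j\}$. Both facts rest on Diaconis--Fulton together with the simple observation that any site with $c_i \geq 1$ automatically becomes a site with $\geq 2$ balls as soon as the walker arrives. The remaining algebra is entirely elementary.
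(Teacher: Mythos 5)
Your argument is correct and is essentially the proof the paper intends: the paper gives no written proof, only the citation to \cite{NT22} and the remark that the identity ``can be seen as a typical one-dimensional drunk walk,'' and your tagged-ball gambler's-ruin computation (justified by Diaconis--Fulton commutativity, with absorption at the two nearest holes of $c$ and exit probabilities $[a]/[a+b]$ and $q^a[b]/[a+b]$) is exactly that computation made explicit. The one point worth keeping is your observation that the statement implicitly assumes $c_j\geq 1$ so that $a+b>0$ and $[a+b]\neq 0$.
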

This can be seen as a typical one-dimensional drunk walk, as studied in \cite{Fel68}.
\begin{figure}[!ht]
\centering
    \includegraphics[width=0.35\textwidth]{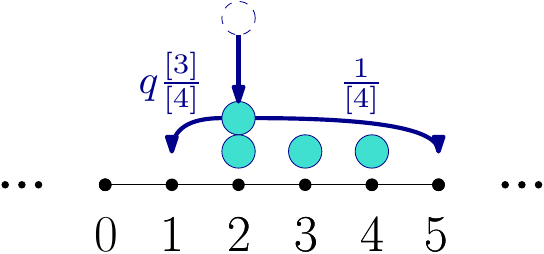}
\caption{A ball moving multiple sites at once.\label{fg:gpas}}
\end{figure}

After defining the \emph{reverse configuration} $\overline{c} := (c_n,...,c_1)$, we have:

\begin{proposition}[\cite{NT22}, Lemma 5.1]
\label{pr:reverse}
$A_c(q) = q^{\binom{n}{2}}A_{\overline{c}}(\frac{1}{q}).$
\end{proposition}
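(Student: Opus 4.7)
The plan is to exploit the left--right symmetry of the particle model. Under the dynamics with parameter $q$, a ball landing on an occupied site jumps left with probability $q/(1+q)$ and right with probability $1/(1+q)$. If we replace $q$ by $1/q$, these two probabilities exchange roles: left becomes $1/(1+q)$ and right becomes $q/(1+q)$. So running the model at parameter $1/q$ is stochastically identical to running the model at parameter $q$ with left and right swapped.

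The key step is then a pathwise bijection. Reflecting the integer line around $(n+1)/2$ sends site $i$ to site $n+1-i$, hence the starting configuration $c$ to $\overline{c}$, and exchanges ``left step'' and ``right step''. This reflection is an involution on the space of histories of the dynamic, and it maps a history starting from $c$ to a history starting from $\overline{c}$ of the same length with each ``left'' replaced by a ``right'' and vice versa. Since the interval $[1;n]$ is stable under the reflection, the event that all balls end up within $[1;n]$ is preserved. Combining this with the first observation (swap left/right $=$ swap $q \leftrightarrow 1/q$), we conclude
\[
\dP(c \to [1;n])_{1/q} \;=\; \dP(\overline{c} \to [1;n])_{q}.
\]

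To obtain the stated identity from probabilities to polynomials, I only need to compare $q$-factorials. A direct computation gives $[k]_{1/q} = q^{-(k-1)}[k]_q$, hence $[n]!_{1/q} = q^{-\binom{n}{2}} [n]!_q$. Therefore
\[
A_{\overline{c}}(1/q) = [n]!_{1/q}\,\dP(\overline{c} \to [1;n])_{1/q} = q^{-\binom{n}{2}}[n]!_q\,\dP(c \to [1;n])_q = q^{-\binom{n}{2}} A_c(q),
\]
which rearranges to the claim.

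The only non-trivial point is the pathwise argument. One slightly delicate aspect is that the dynamic is not determined by a single path but by a sequence of ball-moves whose order does not matter (by Diaconis--Fulton, cited above); I would phrase the bijection on a fixed ball-drop order $u_1,\ldots,u_n$, send it to the reversed order $n+1-u_1,\ldots,n+1-u_n$ for $\overline{c}$, and then invoke the Diaconis--Fulton abelian property to know the resulting probability is indeed $\dP(\overline{c} \to [1;n])_q$. Everything else is a routine rewriting.
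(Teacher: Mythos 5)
Your proof is correct and follows essentially the same route as the paper's: reflect the line about $(n+1)/2$ to send $c$ to $\overline{c}$, observe that swapping the left/right probabilities amounts to substituting $q \mapsto 1/q$, and then convert between $[n]!_q$ and $[n]!_{1/q}$ via $[n]!_q = q^{\binom{n}{2}}[n]!_{1/q}$. Your extra care in phrasing the bijection on a fixed drop order and invoking the Diaconis--Fulton abelian property is a welcome tightening of a step the paper leaves implicit, but it is not a different argument.
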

\begin{proof}
    The symmetry of centre $\frac{n+1}{2}$ exchange each site $i \leq \frac{n+1}{2}$ with the site $n+1-i$. The configuration $c$ is now mirrored. Furthermore, swapping the probabilities $q/(1+q)$ and $1/(1+q)$ is equivalent to replacing $q$ by $1+q$. Therefore $\dP(c)_q = \dP(\overline{c})_{1/q}$.
    
    By the definition of remixed Eulerian numbers, we know that $A_{\overline{c}}(1/q) = [n]_{1/q}! \dP(\overline{c})_{1/q}$ and $A_c(q) = [n]_q! \dP(c)_q$, so $A_{\overline{c}}(\frac{1}{q}) [n]_q! = A_c(q) [n]_{1/q}!.$
    
    Finally, we recall the classical identity $[n]_q! = q^{\binom{n}{2}} [n]_{1/q}!.$
\end{proof}

\subsection{Another induction relation}
\label{sec:fin_ind}
For any configuration $c$ and any index $j$, we want to study how the configuration $c$ can get to $[1;n]$, with the added condition that the last ball must fall onto site $j$. Let $u_1,u_2,...,u_n$ be an order of content $c$. In order to highlight the relevancy of the starting point $u_n$ of the last ball dropped, we write $(c-\{ u_n \}) + \{ u_n \}$ instead of $c$.

In order for the last ball to fall onto site $j$, exactly $j-1$ of the balls in $c-\{ u_n \}$ must arrive in $[1;j-1]$, with no ball ever going onto $j$. This is only possible if this same amount of these balls \emph{started} in $[1;j-1].$ We define the \emph{height} of a configuration $c$ at index $k$ as: 
\begin{equation}
H_{c,k} = \sum_{i = 1}^{k}{(c_i-1)}.
\end{equation}

We finally have the induction at the final step, or \emph{final induction} \cite[Eq (3.1)]{NT22}:
\begin{align}
\label{eq:fin_ind}
    A_c &= \sum_{k \text{ s.t. }  H_{c-\{ u_n \},k} = 0} wt(k,u_n) A_{(c_1,...,c_{k-1})} A_{(c_{k+1},...,c_n)},
\end{align}
\begin{align*}
    \text{with } wt(k,u_n) = \left\lbrace
    \begin{array}{ll} \qbin{n}{k}[u_n] &\text{if } k \geq u_n,\\
        q^{u_n-k} \qbin{n}{k-1}[n+1-u_n] &\text{if } k < u_n.
    \end{array}\right.
\end{align*}

The previous induction is especially easy to handle when the sum has at most two terms. This fact was used for two specific classes \cite{NT22}, to which we dedicate Sections \ref{sec:Luka} and \ref{sec:connected}.

\section{{\L}ukasiewicz configurations and connected configurations}
\label{sec:known_fam}

We define in this section two families of configurations. Nadeau and Tewari gave the remixed Eulerian numbers for both of these families in \cite[Sec. 4]{NT22}. These families, and these formulas, will guide us towards the definitions of families of Sections \ref{sec:almost_luka} and \ref{sec:wLuka}, and the proofs of their respective formulas.

\subsection{{\L}ukasiewicz configurations}
\label{sec:Luka}
\begin{definition}
    The \emph{left-to-right order} of a configuration $c$ is the only non-decreasing sequence $u_1,...,u_n$ of content $c$.
\end{definition}
\begin{definition}
A configuration $c$ is \emph{{\L}ukasiewicz} if all its heights $H_{c,i}, i \in [1;n]$ are non-negative.
\end{definition}
\begin{proposition}[\cite{NT22}, Prop 4.2]
\label{pr:acq_luka}
Let $c$ be a {\L}ukasiewicz configuration. Then:
$$A_c = \prod_{a \in \ms(c)}{[a]}.$$
In other words, in order for a {\L}ukasiewicz configuration to end up into $[1;n]$ when balls are dropped left-to-right, all of the balls need to fall on the right.
\end{proposition}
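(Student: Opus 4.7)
The plan is to drop the balls in the left-to-right order $u_1,\dots,u_n$, which is legitimate by the Diaconis--Fulton commutativity recalled in Section~\ref{sec:known_basics}, and to follow the resulting dynamics step by step. The first structural fact I would extract from the {\L}ukasiewicz hypothesis is that $u_i \le i$ for every $i$: by the definition of the left-to-right order we have $c_1+\cdots+c_{u_i-1} \le i-1$, while $H_{c,u_i-1}\ge 0$ gives $c_1+\cdots+c_{u_i-1} \ge u_i-1$; combining these yields $u_i\le i$. In particular $u_1=1$, so the first ball lands immediately at the empty site $1$.

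I would then prove by induction on $i$ the invariant: on the event that the first $i$ balls have all come to rest inside $[1;n]$, the occupied sites are exactly $\{1,\dots,i\}$. The base case is $u_1=1$. For the inductive step, ball $u_{i+1}$ is released at a site $u_{i+1}\le i+1$; either $u_{i+1}=i+1$, which is the empty site directly adjacent to the occupied block $[1,i]$ (and the ball lands there deterministically), or $u_{i+1}\in[1,i]$ and the ball sits on top of that block. In the latter case its trajectory is exactly the one-dimensional walk governed by~\eqref{eq:big_step}, with nearest holes at distances $a=u_{i+1}$ (site $0$ on the left) and $b=i+1-u_{i+1}$ (site $i+1$ on the right). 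Applying~\eqref{eq:big_step}, the ball exits to the right at site $i+1$ with probability $[u_{i+1}]/[i+1]$, which extends the invariant to step $i+1$, and exits to the left at site $0$ with the complementary probability $q^{u_{i+1}}[i+1-u_{i+1}]/[i+1]$.

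It remains to observe that a leftward escape is fatal: a ball settling at site $0$ stays there for the rest of the process, so the final support contains $0\notin[1;n]$. Multiplying the success probabilities along the invariant-preserving branch therefore yields
\[
\dP(c\to[1;n])_q \;=\; \prod_{i=1}^{n}\frac{[u_i]}{[i]} \;=\; \frac{\prod_{a\in\ms(c)}[a]}{[n]!},
\]
and multiplying by $[n]!$ gives the claimed identity $A_c=\prod_{a\in\ms(c)}[a]$. The main subtlety is checking that the successful branch really keeps the occupied region equal to the contiguous interval $[1,i]$, so that the gambler's-ruin distances $a,b$ at each step are those claimed above and~\eqref{eq:big_step} applies cleanly; this is exactly what the induction on $i$ delivers, and gives the accompanying picture that ``all balls fall on the right.''
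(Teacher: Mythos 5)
Your proof is correct, but it takes a genuinely different route from the paper's. The paper's argument is a one-line application of the final induction \eqref{eq:fin_ind}: for a {\L}ukasiewicz configuration dropped left-to-right the sum collapses to the single term $k=n$, giving $A_{u_1,\dots,u_n}=\qbin{n}{n}[u_n]\,A_{u_1,\dots,u_{n-1}}$, and one concludes by induction on $n$, peeling off the last ball. You instead run the particle model forward, dropping the balls one at a time and maintaining the explicit invariant that, barring a leftward escape to site $0$, the occupied set after $i$ drops is exactly the interval $[1;i]$; the gambler's-ruin probabilities then contribute a factor $[u_i]/[i]$ at each step, and the product cancels against $[n]!$. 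The two proofs share the same engine --- the left-to-right order, the bound $u_i\le i$ extracted from nonnegativity of the heights, and the ruin probability $[a]/[a+b]$ --- but yours makes explicit what the paper leaves implicit, namely why no landing site other than the one extending the interval can occur on the successful branch, whereas the paper delegates this to the height condition built into \eqref{eq:fin_ind}. Your version is therefore more self-contained (it does not use \eqref{eq:fin_ind} at all) and directly substantiates the sentence ``all of the balls need to fall on the right''; the paper's is shorter because it reuses machinery already set up in Section \ref{sec:fin_ind}. One presentational nit: \eqref{eq:big_step} is stated as an identity between remixed Eulerian numbers, so when you quote it for the exit probabilities you are really invoking the underlying drunk-walk computation (exit right with probability $[a]/[a+b]$, exit left with probability $q^a[b]/[a+b]$); it would be cleaner to cite that classical fact directly, as the paper does in the remark following \eqref{eq:big_step}.
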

We present here a short proof, that we will generalise in Section \ref{sec:almost_luka} to other configurations.
\begin{proof}
    We take the left-to-right order $u_1, ..., u_n$ of the configuration $c$. Using the final induction \eqref{eq:fin_ind}, we have $A_{u_1,...,u_n} = \qbin{n}{n} [u_n] A_{u_1,...,u_{n-1}}$. By induction, we thus have the equality $A_{u_1,...,u_n} = \prod_{i \in [1;n]}{[u_i]},$ QED.
\end{proof}

\begin{example}
The configuration $(3,0,0,2,0)$ is {\L}ukasiewicz, as its heights are $(2,1,0,1,0).$ Therefore, $A_{(3,0,0,2,0)} = [1]^3 [4]^2 = 1+2q+3q^2+4q^3+3q^4+2q^5+q^6.$
\end{example}
\begin{figure}[!ht]
\centering
    \includegraphics[width=0.35\textwidth]{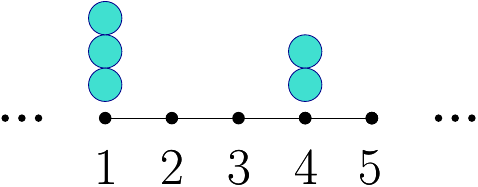}
\caption{A {\L}ukasiewicz configuration. \label{fg:luka}}
\end{figure}

The original statement of this result by Nadeau and Tewari \cite[Eq (5.1)]{NT22} is slightly more general, but we will generalise it in other directions, in Sections \ref{sec:almost_luka} and \ref{sec:wLuka}.

\subsection{Connected configurations}
\label{sec:connected}
A generic configuration $c$, of size $n > 0$, might have many holes on its leftmost and rightmost parts. We can thus write it, in exactly one way, as $(0^k,\gamma,0^{n-m-k})$ with $m > 0$, $\gamma = (\gamma_1,...,\gamma_m)$ and $\gamma_1, \gamma_m > 0$.
\begin{definition}
\label{df:core_part}
We call such a $\gamma$ the \emph{core} of the configuration $c$.
\end{definition}

As $\gamma$ may have fewer sites than balls, it is not a configuration per se, but rather a \emph{sub-configuration}.

\begin{figure}[!ht]
\centering
    \includegraphics[width=0.35\textwidth]{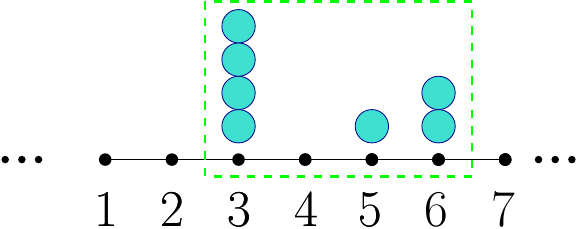}
\caption{In dashed lines: the core of a configuration.\label{fg:core}}
\end{figure}

\begin{definition}
When this core part $\gamma$ has at least one ball per site, i.e. no hole, then we say that $c$ is \emph{connected}.
\end{definition}
As an example, the configuration $c = (0,1,2,2,0)$ has core $(1,2,2)$ and is thus connected. Conversely, the configuration $c = (0,3,0,2,0)$ has core $(3,0,2)$, so it is not connected.

\begin{figure}[!ht]
\centering
    \includegraphics[width=0.35\textwidth]{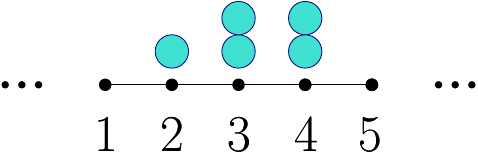}
\caption{A connected configuration.\label{fg:connected}}
\end{figure}
\begin{definition}
The \emph{q-Pochhammer} $(t;q)_{n}$ is a $q$-analog of $(1-t)^n$: $(t;q)_{n} = \prod_{i=0}^{n-1}{(1-tq^i)}.$ \end{definition}
Moreover, we have the $q$-binomial theorem \cite[Thm 3.3]{And84}: 
\begin{equation}
\label{eq:q-poch_coeff}
(t;q)_n = \sum_{j=0}^n{(-1)^j t^j q^{\binom{j}{2}} \qbin{n}{j}}.
\end{equation}

We can then give closed formulas of $A_c(q)$ for any connected configuration $c$, written as $(0^k,\gamma,0^{n-m-k})$.

\begin{theorem}(\cite{NT22}, Eq (4.2))
\label{th:acq_cnx}
Let $\gamma$ be a sub-configuration with no hole, with $n$ total balls on $m$ sites. Then:
\begin{equation}
    \label{eq:acq-cnx-qpoch}
    \frac{\sum_{i=0}^{n-m}{t^i A_{(0^i,\gamma,0^{n-m-i})}}}{(t;q)_{n+1}} = \sum_{j \geq 0}{t^j \prod_{a \in \ms(\gamma)}{[j+a]}}.
\end{equation}

Extracting a single coefficient of the numerator thanks to Equality \eqref{eq:q-poch_coeff}, we get that for all $i \in [0;n-m]$:
\begin{equation}
    \label{eq:acq-cnx-1coeff}
    A_{(0^i,\gamma,0^{n-m-i})} = \sum_{j = 0}^{i}{(-1)^{i+j}q^{\binom{i-j}{2}}\qbin{n+1}{i-j}} \prod_{a \in \ms(\gamma)}{[j+a]}.
\end{equation}
\end{theorem}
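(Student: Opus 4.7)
The plan is to establish the explicit closed form \eqref{eq:acq-cnx-1coeff}, from which the generating-function identity \eqref{eq:acq-cnx-qpoch} assembles by means of the $q$-binomial theorem \eqref{eq:q-poch_coeff}. For fixed $\gamma$ with no hole, I would prove \eqref{eq:acq-cnx-1coeff} by induction on $i \in [0, n-m]$.

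The base case $i = 0$ is immediate: since $\gamma$ has no hole, the partial sums $\sum_{l=1}^{k} \gamma_l \geq k$ for $k \in [1, m]$, so $(\gamma, 0^{n-m})$ has non-negative heights throughout (they ramp up inside $\gamma$ and then decrease through the trailing zeros down to $0$), hence it is {\L}ukasiewicz. Proposition \ref{pr:acq_luka} then gives $A_{(\gamma, 0^{n-m})} = \prod_{a \in \ms(\gamma)} [a]$, which matches the single surviving term ($j = 0$) of the claimed formula at $i = 0$. The opposite extreme $i = n - m$ follows similarly by combining Proposition \ref{pr:reverse} with {\L}ukasiewicz applied to $(\bar \gamma, 0^{n-m})$, noting that $\bar \gamma$ also has no hole, which provides a second anchor.

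For the inductive step, I would apply the final induction \eqref{eq:fin_ind} to $c_i = (0^i, \gamma, 0^{n-m-i})$, choosing $u_n$ to be the rightmost ball, at site $i + m$. The analysis required by \eqref{eq:fin_ind} shows that the heights of $c_i - \{u_n\}$ are strictly negative on $[1, i]$, follow the partial sums of $\gamma$ (shifted down by the leading zeros) on $[i+1, i+m]$, and equal $n - 1 - k$ on $[i+m, n]$; generically they vanish at a single split point, producing a one-term relation that expresses $A_{c_i}$ in terms of a connected configuration of strictly smaller total number of balls, to which the induction hypothesis applies. The main obstacle I anticipate is handling \emph{non-generic} cores whose interior partial heights vanish at additional points within $\gamma$: several $k$'s then contribute to \eqref{eq:fin_ind}, and one must show the weighted sum collapses to \eqref{eq:acq-cnx-1coeff} via a $q$-Vandermonde-style manipulation. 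A cleaner alternative I would ultimately prefer is to introduce the auxiliary configuration $\tilde c_j := (1^j, \gamma, 0^{n-m})$ on $n + j$ sites, which is {\L}ukasiewicz for every $j \geq 0$ by the same partial-sum argument, with value $A_{\tilde c_j} = [j]! \prod_{a \in \ms(\gamma)} [j+a]$, and then to decompose $A_{\tilde c_j}$ probabilistically according to how many of the $j$ leftmost unit balls are displaced by $\gamma$'s dynamic; this shuffle decomposition produces \eqref{eq:acq-cnx-qpoch} directly in its generating-function form, bypassing the alternating-sum bookkeeping entirely.
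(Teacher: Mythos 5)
Your first route has a genuine gap. For $0<i<n-m$, the final induction \eqref{eq:fin_ind} applied to $c^{(i)}=(0^i,\gamma,0^{n-m-i})$ with $u_n$ the rightmost ball does \emph{not} yield a one-term relation: since $d^{(i)}:=c^{(i)}-\{i+m\}$ is connected, its $n-1$ balls always end on an interval, which must be $[1,n-1]$ or $[2,n]$, so the last ball may land at site $n$ \emph{or} at site $1$, giving the two-term recursion $A_{c^{(i)}}=[i+m]\,A_{d^{(i)}}+q^{i+m-1}[n+1-i-m]\,A_{d^{(i-1)}}$. (Your worry about extra interior vanishings of the height is the wrong worry: an interior landing site would force the connected configuration $d^{(i)}$ to fill $[1,n]\setminus\{k\}$, two disjoint intervals, so those terms are automatically zero.) Both surviving terms involve configurations with $n-1$ balls and with $i$ resp.\ $i-1$ leading zeros, so an induction on $i$ alone does not close; one must induct on $n$ carrying all values of $i$ at once. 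That is precisely what the paper does: it proves the generating-function form \eqref{eq:acq-cnx-qpoch} by induction on $n$, multiplying the two-term recursion by $t^i$, summing over $i$, and telescoping with $q$-integer identities to extract the factor $(1-tq^n)$; the coefficient formula \eqref{eq:acq-cnx-1coeff} is then read off via \eqref{eq:q-poch_coeff}, the reverse of your intended order.

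Your second route, by contrast, is sound and genuinely different from the paper's, but you have omitted its only nontrivial step. With $\tilde c_j=(1^j,\gamma,0^{n-m})$ {\L}ukasiewicz and $A_{\tilde c_j}=[j]!\prod_{a\in\ms(\gamma)}[j+a]$, drop the $\gamma$-balls first: they settle on an interval $[j+1-i,\,j+n-i]$ with probability $A_{(0^i,\gamma,0^{n-m-i})}/[n]!$. The unit balls at sites $1,\dots,j-i$ stay put, and each of the $i$ displaced ones must exit the occupied block to the right; processing them left to right, \eqref{eq:big_step} gives exit probabilities $[s]/[n+s]$ for $s=j-i+1,\dots,j$, with product $\frac{[j]!\,[n+j-i]!}{[j-i]!\,[n+j]!}$. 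Clearing factorials yields $\prod_{a\in\ms(\gamma)}[j+a]=\sum_{i}\qbin{n+j-i}{n}A_{(0^i,\gamma,0^{n-m-i})}$, which is \eqref{eq:acq-cnx-qpoch} once one recognises $\qbin{n+k}{n}$ as the Taylor coefficients of $1/(t;q)_{n+1}$. To make this a proof you must supply the exit-probability computation, the fact that a connected configuration's final support is an interval containing its initial support, and the final identification with the $q$-Pochhammer expansion; as written, ``this shuffle decomposition produces \eqref{eq:acq-cnx-qpoch} directly'' is an assertion. Completed, it would be a more conceptual argument than the paper's telescoping induction, at the price of invoking the Diaconis--Fulton commutation more heavily.
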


This theorem comes from Nadeau and Tewari's paper \emph{Remixed Eulerian numbers} \cite[Eq. (4.2)]{NT22}. However, our proof is more elementary than theirs. As such, we will be able to extend it to other configurations, see Section \ref{sec:wLuka}.

\begin{proof}
    By induction over $n$. We write $c^{(i)}$ the configuration $(0^i,\gamma,0^{n-m-i-1})$, $b = m$, and finally $d^{(i)} = c^{(i)}-\{ i+b \}$. We then have, thanks to the final induction \eqref{eq:fin_ind}:
    \begin{align*}
        \sum_{i=0}^{n-m}{t^i A_{c^{(i)}}} &= \sum_{i=0}^{n-m} {A_{d^{(i)} + \{i+b\}}} \\
        &= \sum_{i=0}^{n-m}{t^i [i+b] A_{d^{(i)}}} + \sum_{i=0}^{n-m}{t^i q^{i+b-1} [n+1-i-b] A_{d^{(i-1)}}} \\
        &= \frac{1}{q-1} \left( \sum_{i = 0}^{n-m-1}{\hspace{-5pt} (tq)^i q^b A_{d^{(i)}}} - \hspace{-10pt} \sum_{i = 0}^{n-m-1}{t^i A_{d^{(i)}}} + \sum_{i = 1}^{n-m}{t^i q^n A_{d^{(i-1)}}} - \hspace{-5pt} \sum_{i = 1}^{n-m}{(tq)^i q^{b-1} A_{d^{(i-1)}}} \right) \\
        &= \frac{q^b(1-t)}{q-1}\sum_{i=0}^{n-m-1}{(tq)^i A_{d^{(i)}}} + \frac{tq^n-1}{q-1}\sum_{i=0}^{n-m-1}{t^i A_{d^{(i)}}}.
    \end{align*}
    Using the induction hypothesis, we get:
    \begin{align*}
        \sum_{i=0}^{n-m}{t^i A_{c^{(i)}}} &= \frac{q^b}{q-1}\sum_{j \geq 0}{(tq)^j (1-t)(tq;q)_n \hspace{-10pt} \prod_{a \in \ms(\gamma - \{ b \})}{\hspace{-20pt} [j+a]}} + \frac{tq^{n}-1}{q-1}\sum_{j \geq 0}{t^j (t;q)_n \hspace{-10pt} \prod_{a, a \in \ms(\gamma - \{ b \})}{\hspace{-20pt} [j+a]}} \\
        &= \sum_{j \geq 0}{t^j (t;q)_{n+1} \left( \frac{q^{b+j}}{q-1} - \frac{1}{q-1} \right) } \prod_{a \in \ms(\gamma - \{ b \})}{[j+a]} \\
        &= (t;q)_{n+1} \sum_{j \geq 0}{t^j \prod_{a \in \ms(\gamma)}{[j+a]}}.
    \end{align*}
\end{proof}

In our example $c = (0,1,2,2,0)$ from Figure \ref{fg:connected}, we thus have:
\begin{align*}
A_{(0,1,2,2,0)} &= (-1)^1 q^0 \qbin{6}{1} [1] [2]^2 [3]^2 + (-1)^2 q^0 \qbin{6}{0} [2] [3]^2 [4]^2 \\
&= [2] [3]^2 [4]^2 - [2]^2 [3]^2 [6] \\
&= q^2+5q^3+12q^4+18q^5+18q^6+12q^7+5q^8+q^9.
\end{align*}

\noindent{\bf{Garsia-Remmmel's $q$-hit numbers.}}
In \cite[Sec 4.2]{NT22}, Nadeau and Tewari proved that the family of remixed Eulerian numbers of connected configurations is exactly the family of $q$-hit numbers.

The \emph{$q$-hit number} $H_i(\lambda,q)$ was previously defined by Garsia and Remmel \cite{GarRem86}, and has been given combinatorial interpretations by Dworkin \cite{Dwo98}, Haglund and Remmel \cite{HagRem01}, then Colmenajero, Morales and Panova \cite{CMP21}. It is a polynomial in $q$, depending on parameters $q > 0, i \in [1;n]$ and $\lambda$ a partition. Furthermore, one should compare to Eq \eqref{eq:acq-cnx-qpoch} for connected configurations to the following result, due to Garsia and Remmel:

\begin{proposition}[\cite{GarRem86}]
\label{pr:qhits_gen_ser}
Let $\lambda$ be a partition of size $n$. We then have:
\begin{equation}
    \label{eq:qhit_form}
    \frac{\sum_{i=0}^{n}{t^i H_i(\lambda,q)}}{(t;q)_{n+1}} = \sum_{j \geq 0}{t^j \prod_{i = 1}^{n}[j+i-\lambda_{n+1-i}]}.
\end{equation}
\end{proposition}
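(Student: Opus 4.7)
The plan is to deduce the statement from our Theorem~\ref{th:acq_cnx} via the identification, established in \cite[Sec.~4.2]{NT22}, between remixed Eulerian numbers of connected configurations and Garsia and Remmel's $q$-hit numbers. The right-hand sides of \eqref{eq:qhit_form} and \eqref{eq:acq-cnx-qpoch} have exactly the same shape, so the task reduces to building the right dictionary between partitions and connected sub-configurations.

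The first step associates to each Ferrers board $\lambda$ (of the right shape for $H_i(\lambda,q)$ to be defined as a polynomial with nonnegative coefficients) the connected sub-configuration $\gamma(\lambda)$ whose multiset $\ms(\gamma)$ is the multiset $\{i - \lambda_{n+1-i}\}_{i \in [1;n]}$ translated so that its support is an interval $[1;m]$. A short combinatorial check, using the staircase bound on $\lambda$, shows that this multiset does have connected support: the map $i \mapsto i - \lambda_{n+1-i}$ has increments bounded above by $1$, so its range cannot skip values on its way to its maximum. The integer $m$ is then the number of distinct values appearing in the shifted multiset, and $\gamma(\lambda)$ has $n$ balls on those $m$ sites.

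The second step identifies $H_i(\lambda,q) = A_{(0^{i'}, \gamma(\lambda), 0^{n-m-i'})}(q)$ for $i'$ an explicit affine function of $i$ encoding the global translation. Once this identification is in hand, the two sides of \eqref{eq:qhit_form} are exactly the two sides of \eqref{eq:acq-cnx-qpoch} in disguise, and the identity follows.

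The main obstacle I expect is the careful setup of the bijection, in particular verifying connectedness of the support for every admissible $\lambda$ and pinning down the precise index shift; neither step is deep, but together they carry the real content of the reduction. A self-contained alternative would replicate the inductive argument given above for Theorem~\ref{th:acq_cnx} directly in the $q$-hit setting, by induction on $|\lambda|$ using a cell-removal recursion for $q$-hit numbers together with the $q$-binomial theorem \eqref{eq:q-poch_coeff}.
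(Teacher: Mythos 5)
The paper does not actually prove this proposition: it is quoted from Garsia and Remmel, and it serves as the \emph{input} from which Nadeau and Tewari derive the identification $H_i(\lambda,q)=A_c(q)$ by comparing \eqref{eq:qhit_form} with \eqref{eq:acq-cnx-qpoch}. Your main route therefore runs in a circle. You propose to prove \eqref{eq:qhit_form} by combining Theorem \ref{th:acq_cnx} with the identification ``established in [NT22, Sec.\ 4.2]''; but, as the paper states immediately after the proposition, that identification is itself obtained \emph{from the comparison} of the two generating functions, i.e.\ it presupposes the very identity you are trying to prove. To make your reduction legitimate you would need an independent proof that $H_i(\lambda,q)$ equals the remixed Eulerian number of the corresponding connected configuration --- for instance by matching the Garsia--Remmel recursion for $q$-hit numbers against the final induction \eqref{eq:fin_ind}, or by a direct bijection between rook placements and the particle dynamics. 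Your dictionary between $\lambda$ and $\gamma(\lambda)$ is right in spirit (the increments of $i\mapsto i-\lambda_{n+1-i}$ are at most $1$, so the shifted multiset has interval support), but the dictionary alone does not establish that the two polynomial families coincide.

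The self-contained alternative you mention in your last sentence --- redoing the induction of Theorem \ref{th:acq_cnx} in the $q$-hit setting, by induction on $\lambda$ with a cell-removal recursion and the $q$-binomial theorem \eqref{eq:q-poch_coeff} --- is the way to get a genuine proof, and is essentially what an actual derivation of the Garsia--Remmel identity looks like; but as written it is a one-sentence remark rather than an argument. As it stands, you should either treat the proposition as a citation (as the paper does) or carry out that alternative in full.
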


From this comparison, Nadeau and Tewari proved in \cite[Section 4.2]{NT22} that, for any $q$-hit number $H_i(\lambda,q)$ of size $n$, there is a connected configuration $c$ also of size $n$, such that we have $A_c = H_i(\lambda,q)$. Conversely, from any connected configuration $c$, one can build a partition $\lambda$ and choose an integer $i$ such that $H_i(\lambda,q) = A_c.$

The family of remixed Eulerian number thus extends the $q$-hit numbers, seeing as a generic configuration need not be connected.

\section{Extensions of the usual families}
\label{sec:extensions}
We give here two families of configurations, related to the families of Section \ref{sec:known_fam}. We first prove that a specific family, previously introduced by Carlitz and Scoville \cite{CarSco74}, is a specialisation of mixed Eulerian numbers $A_c(1)$ of connected configurations. As such, we give a natural $q$-analog of them.

In Section \ref{sec:almost_luka}, we define a new class of configurations, and give their remixed Eulerian number. The proof of this result, and the definition of this class, are reminiscent of {\L}ukasiewicz configurations. As such, we name this class: \emph{almost {\L}ukasiewicz configurations.} 

\subsection{Carlitz-Scoville generalised Eulerian numbers}
\label{sec:ab-eul}

We introduce two parameters for this section: $x$ and $y$, both being positive integers. 
We then define $A(r,s|x,y) := \frac{1}{(x+y-1)!} A_{(0^r,1^{y-1},r+s+1,1^{x-1},0^s)}(1).$ This notation is chosen to match Carlitz and Scoville's \emph{generalised Eulerian numbers} $A(r,s|\alpha,\beta)$ introduced in \cite{CarSco74}, later called \cite{Ji23} \emph{generalised Eulerian numbers}.

We first notice that $A(0,0|x,y) = 1,$ similarly to generalised Eulerian numbers. We then prove that our numbers satisfy the same induction relation \cite[Eq (1.9)]{CarSco74}, namely: 

\begin{lemma}
For all $x$, $y$, $r$, $s$ positive integers, we have:
\begin{equation}
\label{eq:a,b_recu}
A(r,s|x,y) = (s+x) A(r-1,s|x,y) + (r+y) A(r,s-1|x,y). 
\end{equation}
\end{lemma}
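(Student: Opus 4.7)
The strategy will be to apply the final induction~\eqref{eq:fin_ind} to the configuration $c = (0^r, 1^{y-1}, r+s+1, 1^{x-1}, 0^s)$, taking the last ball to be one from the tall stack, $u_n = r+y$. Then $c' := c - \{u_n\} = (0^r, 1^{y-1}, r+s, 1^{x-1}, 0^s)$ still has the same ``zeros / ones / stack / ones / zeros'' shape, just with one fewer ball in the stack.

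The key point is that removing a single ball from the tall stack leaves the heights of $c'$ piecewise very simple: they decrease from $0$ to $-r$ across the left zero run, stay flat at $-r$ across the left ones, jump to $s-1$ at the stack, stay flat at $s-1$ across the right ones, then decrease back to $-1$ at $k = n$. Since $r, s \geq 1$, both plateau values $-r$ and $s-1$ are nonzero, so the vanishing-height condition in the final induction admits only two landing sites for the last ball: $k = 1$ (crossing all the way to the far-left empty region) and $k = n$ (crossing all the way to the far-right empty region). Every intermediate landing site is blocked by the constant nonzero heights on the two ``1'' stretches.

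I would then read off the two contributions. For $k = 1$ one is in the $k < u_n$ branch of the weight formula, giving $wt(1, r+y) = q^{r+y-1}[s+x]$, which specialises to $s+x$ at $q = 1$; the empty left sub-configuration contributes $1$, and the surviving right sub-configuration $(0^{r-1}, 1^{y-1}, r+s, 1^{x-1}, 0^s)$ equals $(x+y-1)!\,A(r-1, s|x, y)$ by the definition of the generalised Eulerian numbers. Symmetrically, for $k = n$ the weight is $[r+y]$, becoming $r+y$ at $q = 1$, and the left sub-configuration is $(0^r, 1^{y-1}, r+s, 1^{x-1}, 0^{s-1}) = (x+y-1)!\,A(r, s-1|x, y)$, with the right sub-configuration empty.

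Combining the two contributions via~\eqref{eq:fin_ind}, using $A_c(1) = (x+y-1)!\,A(r,s|x,y)$, and dividing through by $(x+y-1)!$ yields exactly the stated recurrence. The only real work is the height bookkeeping that excludes all intermediate $k$; once one sees that the heights on the two ``1'' stretches are constant and nonzero, the reduction to two terms is forced and the rest is direct identification of each sub-configuration with a smaller generalised Eulerian number.
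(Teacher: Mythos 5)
Your approach is the same as the paper's: apply the final induction \eqref{eq:fin_ind} to $(0^r,1^{y-1},r+s,1^{x-1},0^s)+\{r+y\}$ and identify the two surviving terms with $(x+y-1)!\,A(r-1,s|x,y)$ and $(x+y-1)!\,A(r,s-1|x,y)$. Your weights $q^{r+y-1}[s+x]$ and $[r+y]$, their specialisations at $q=1$, and the identification of the two sub-configurations are all correct; the paper simply performs this computation in four lines without spelling out the height bookkeeping.

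However, the step where you exclude all intermediate landing sites has a genuine gap. You claim that the plateau value $s-1$, reached at the stack and held across the right run of $1$'s of $c'=(0^r,1^{y-1},r+s,1^{x-1},0^s)$, is nonzero because $s\geq 1$; but $s\geq 1$ only gives $s-1\geq 0$, and for $s=1$ (which the lemma allows) this plateau sits exactly at height $0$. For $s=1$ and $x\geq 2$ the vanishing-height condition is therefore met at a whole range of intermediate sites $k$, not only at $k=1$ and $k=n$. The two-term recurrence still holds, but for a different reason: at each such intermediate $k$ one has $c'_k=1>0$, so an earlier ball settles on site $k$ and the last ball cannot land there; equivalently, the right sub-configuration $(c'_{k+1},\dots,c'_n)=(1^{n-1-k},0)$ has one ball fewer than sites, so its remixed Eulerian number vanishes and the corresponding term of \eqref{eq:fin_ind} drops out. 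Your "nonzero plateau" argument is sound on the left, where the value $-r\leq -1$ genuinely never vanishes, but you need this extra observation to dispose of the right-hand side when $s=1$.
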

\begin{proof}
We first use the definition of our $A(r,s|x,y)$:
\begin{align*}
(x+y-1)! A(r,s|x,y) &= A_{(0^r,1^{y-1},r+s+1,1^{x-1},0^s)}(1) \\
&= A_{(0^r,1^{y-1},r+s,1^{x-1},0^s) + \{ r+y \} }(1) \\
&= (s+x) A_{(0^{r-1},1^{y-1},r+s,1^{x-1},0^s)}(1) + (r+y) A_{(0^r,1^{y-1},r+s,1^{x-1},0^{s-1})}(1) \\
&= (x+y-1)! ((s+x) A(r-1,s|x,y) + (r+y) A(r,s-1|x,y)).
\end{align*}
We used the final induction \eqref{eq:fin_ind} to get to line three.
\end{proof}

 These polynomials thus coincide with Carlitz-Scoville generalised Eulerian numbers:
 \begin{theorem}
\label{th:a,b_sub_acq}
The family $A(r,s|x,y)$ introduced by Carlitz and Scoville \cite{CarSco74} is a special case of remixed Eulerian numbers. More precisely, we have:
\begin{equation}
A(r,s|x,y) = \frac{1}{(x+y-1)!}A_{(0^r,1^{y-1},r+s+1,1^{x-1},0^s)}(1).
\end{equation}
Seeing as the relevant configurations are connected, we further note that the generalised Eulerian numbers are also a specialisation of the hit numbers of Garsia and Remmel.
\end{theorem}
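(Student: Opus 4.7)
The plan is to leverage the lemma that immediately precedes the theorem: the quantities $A(r,s\mid x,y)$ defined via the remixed Eulerian numbers satisfy the same two-term recursion
\[
A(r,s\mid x,y) = (s+x)A(r-1,s\mid x,y) + (r+y)A(r,s-1\mid x,y)
\]
as Carlitz and Scoville's generalised Eulerian numbers (their Eq.~(1.9)), so a double induction on $r$ and $s$ will identify the two families provided we match enough boundary values.

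First I would verify the base case $A(0,0\mid x,y) = 1$. Here the associated configuration is $(1^{y-1},1,1^{x-1}) = (1^{x+y-1})$, a {\L}ukasiewicz (in fact fully supported) configuration on $n=x+y-1$ sites with one ball at each site, so Proposition~\ref{pr:acq_luka} gives $A_{(1^{x+y-1})}(1) = \prod_{i=1}^{x+y-1}[i]_{q=1} = (x+y-1)!$, matching the normalisation.

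Next I would handle the one-dimensional boundaries $A(r,0\mid x,y)$ and $A(0,s\mid x,y)$. These correspond to connected configurations $(0^r,1^{y-1},r+1,1^{x-1})$ and $(1^{y-1},s+1,1^{x-1},0^s)$, whose remixed Eulerian numbers at $q=1$ can either be read off from Proposition~\ref{pr:reverse} (to reduce the $s=0$ case to the $r=0$ case) and from the connected-configuration formula \eqref{eq:acq-cnx-1coeff}, or be computed directly from the same final induction \eqref{eq:fin_ind} used in the preceding lemma (dropping the last ball has only one admissible splitting index once $r$ or $s$ vanishes). Either way, the resulting values must be checked to coincide with the Carlitz--Scoville boundary values $A(r,0\mid x,y)$ and $A(0,s\mid x,y)$ listed in \cite{CarSco74}, which is a finite calculation.

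With the base case and the two boundaries matched, the recursion established in the lemma propagates the identification to every $(r,s)$ by induction on $r+s$, yielding the theorem. The final remark about $q$-hit numbers is immediate since the configurations $(0^r,1^{y-1},r+s+1,1^{x-1},0^s)$ are visibly connected, hence realise $H_i(\lambda,q)$ for an appropriate partition by the discussion following Proposition~\ref{pr:qhits_gen_ser}. The main obstacle I anticipate is not conceptual but bookkeeping: matching the Carlitz--Scoville boundary conventions (and their possible reliance on negative-index conventions) with the configurations that literally appear when $r=0$ or $s=0$, to make sure the induction actually starts from the same initial data and is not off by a shift.
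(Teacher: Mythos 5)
Your proposal matches the paper's argument: the paper likewise deduces the theorem directly from the base case $A(0,0|x,y)=1$ together with the preceding lemma showing that the normalised remixed Eulerian numbers satisfy Carlitz--Scoville's recursion \eqref{eq:a,b_recu}, and the concluding remark about $q$-hit numbers follows from connectedness exactly as you say. Your extra care about the boundary values $A(r,0|x,y)$ and $A(0,s|x,y)$ is a point the paper leaves implicit (the final induction \eqref{eq:fin_ind} degenerates to a single term there, consistent with the convention that $A(r,-1)=A(-1,s)=0$), so it is a legitimate refinement rather than a different route.
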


We thus have a natural $q$-analog of these generalised Eulerian numbers:
\begin{equation}
A(r,s|x,y)_q := \frac{1}{[x+y-1]!} A_{(0^r,1^{y-1},r+s+1,1^{x-1},0^s)}(q).    
\end{equation}

\begin{proposition}
This q-analog satisfy a q-version of the induction equation \eqref{eq:a,b_recu}:
\begin{equation}
A(r,s|x,y)_q = q^{r+y}[s+x] A(r-1,s|x,y)_q + [r+y] A(r,s-1|x,y)_q.
\end{equation} \end{proposition}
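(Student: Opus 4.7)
The plan is to multiply through by $[x+y-1]!$, which reduces the identity to the equivalent relation
\[
A_c(q) = q^{r+y}\,[s+x]\,A_{c_1}(q) + [r+y]\,A_{c_2}(q),
\]
where $c=(0^r,1^{y-1},r+s+1,1^{x-1},0^s)$, $c_1=(0^{r-1},1^{y-1},r+s,1^{x-1},0^s)$, and $c_2=(0^r,1^{y-1},r+s,1^{x-1},0^{s-1})$; the strategy is then to check this relation directly using the closed formula of Theorem \ref{th:acq_cnx}, which applies since all three configurations are connected. The crucial observation is that $c_1$ and $c_2$ share the same core $\gamma'=(1^{y-1},r+s,1^{x-1})$, whereas $c$ has core $\gamma=(1^{y-1},r+s+1,1^{x-1})$; the two cores coincide outside the central site, so $\prod_{a\in\ms(\gamma)}[j+a] = [j+y]\prod_{a\in\ms(\gamma')}[j+a]$ for every $j$.

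Applying Theorem \ref{th:acq_cnx} to $A_c$, $A_{c_1}$, $A_{c_2}$, each becomes a signed sum indexed by $j$ whose terms all share the factor $P_j:=\frac{[j+x+y-1]!}{[j]!}\,[j+y]^{r+s-1}$. I would then equate the coefficient of $P_j$ on both sides. The endpoint $j=r$ is immediate, since only the $[r+y]A_{c_2}$ summand contributes and the identity reads $[r+y]=[r+y]$. For $0\le j<r$, after factoring out the common sign $(-1)^{r+j}$ and the $q$-power $q^{\binom{r-j}{2}}$, one is left with a linear relation among $\qbin{n+1}{r-j}$, $\qbin{n}{r-j}$ and $\qbin{n}{r-1-j}$ whose $q$-integer coefficients depend on $r+y$ and $s+x$.

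I would settle this last relation using the $q$-Pascal rule $\qbin{n+1}{k}=\qbin{n}{k}+q^{n-k+1}\qbin{n}{k-1}$ together with the elementary identities $[A]=[A-k]+q^{A-k}[k]$ and $[k]\qbin{n}{k}=[n+1-k]\qbin{n}{k-1}$, using the relation $s+x=n+1-(r+y)$ freely to translate between $[s+x]$ and $[n+1-(r+y)]$. The main obstacle will be the bookkeeping of $q$-powers, which come from three distinct sources: the shift between $q^{\binom{r-j}{2}}$ and $q^{\binom{r-1-j}{2}}$ in the two applications of Theorem \ref{th:acq_cnx}, the prefactor $q^{r+y}$ appearing in the statement, and the $q^{n-k+1}$ introduced by $q$-Pascal. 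Making all three contributions conspire so that the identity collapses to the elementary relation above is the technical heart of the argument; as a sanity check, specialising to $q=1$ should recover \eqref{eq:a,b_recu}, which has already been established.
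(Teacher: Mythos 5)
Your route is genuinely different from the paper's. The paper's proof is structural and essentially one line: write $A_{(0^r,1^{y-1},r+s+1,1^{x-1},0^s)}$ as $A_{d+\{r+y\}}$ with $d=(0^r,1^{y-1},r+s,1^{x-1},0^s)$ and apply the final induction \eqref{eq:fin_ind}; the sum has exactly two admissible indices (the leftmost and rightmost free sites), whose weights $wt(1,r+y)$ and $wt(n,r+y)=[r+y]$ are precisely the coefficients of the recursion, and the two split configurations are your $c_1$ and $c_2$. That argument explains where the coefficients come from and requires no computation. Your plan instead verifies the recursion a posteriori from the closed formula of Theorem \ref{th:acq_cnx}. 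It is viable: matching the coefficient of $P_j$ term by term is a sufficient (not necessary, but here successful) strategy, and with $k=r-j$ the relation you describe collapses, via exactly the three identities you list, to $[n+1-k]=[s+x]+q^{s+x}[r+y-k]$. So the skeleton is right, but two caveats are in order.

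First, as written the proposal defers the decisive computation --- you explicitly label the $q$-power bookkeeping ``the technical heart'' without carrying it out --- so this is an outline rather than a proof, whereas the paper's argument is complete as stated. Second, when one actually does that bookkeeping, the recursion closes with the coefficient $q^{r+y-1}[s+x]$, not $q^{r+y}[s+x]$: this matches $wt(1,r+y)=q^{r+y-1}\qbin{n}{0}[n+1-(r+y)]$ coming from \eqref{eq:fin_ind}, and a direct check confirms it, e.g.\ $A(1,1|1,1)_q=A_{(0,3,0)}(q)=2q[2]$ while $q^{2}[2]\,A(0,1|1,1)_q+[2]\,A(1,0|1,1)_q=q^2[2]+q[2]\neq 2q[2]$. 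So the exponent in the stated Proposition appears to be off by one; your method, carried to completion, is exactly the computation that would detect this, but you should not expect the $q$-powers to ``conspire'' to give the exponent $r+y$ as printed.
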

\begin{proof}
We prove this by induction on $r+s$. The base case $r = s = 0$ follows from the definitions.

For the induction step, we follow the same path as the proof of the above Theorem \ref{th:a,b_sub_acq}: we write $A_{(0^r,1^{y-1},r+s+1,1^{x-1},0^s)}(q)$ as $A_{(0^r,1^{y-1},r+s,1^{x-1},0^s) + \{ r+y \}}(q)$, then use the final induction \eqref{eq:fin_ind}.
\end{proof}

\begin{proposition}
This $q$-analog has the following generating function, with fixed $r$ and $s$:
\begin{equation}
\frac{\sum_{i=0}^{r+s}{t^i A(i,r+s-i|x,y)_q}}{(t;q)_{r+s+x+y-1}} = \sum_{j \geq 0}{t^j {  \qbin{j+x+y-1}{j} [j+y}]^{r+s}}.
\end{equation}
Equivalently, extracting the coefficient in $t^r$ of the numerator, we get that for all $r,s$:
\begin{equation}
\label{eq:a,b_q}
A(r,s|x,y)_q = \sum_{j = 0}^{r}{(-1)^{r+j}q^{\binom{r-j}{2}} \qbin{j+x+y-1}{j} \qbin{r+s+x+y}{r-j}} [j+y]^{r+s}.
\end{equation}
\end{proposition}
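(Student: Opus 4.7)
The key observation is that the configuration $c = (0^r,1^{y-1},r+s+1,1^{x-1},0^s)$ is connected, with core $\gamma = (1^{y-1},r+s+1,1^{x-1})$ occupying $m = x+y-1$ sites and containing $n = r+s+x+y-1$ balls in total. Crucially, once we fix the sum $r+s = N$, the core depends only on $N$, $x$ and $y$ — not on how $N$ splits into $r$ and $s$. So varying the first parameter from $0$ to $N$ exactly traces out the configurations $(0^i,\gamma,0^{N-i})$ as $i$ ranges from $0$ to $n-m$. This places the family $\{A(i,N-i\mid x,y)_q\}_i$ squarely in the scope of Theorem~\ref{th:acq_cnx}.

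Therefore my plan is to apply Equation~\eqref{eq:acq-cnx-qpoch} directly and simplify the right-hand side. The multiset is
\[
\ms(\gamma) = \{1,2,\ldots,y-1\} \cup \{y\}^{r+s+1} \cup \{y+1,\ldots,y+x-1\},
\]
so the product collapses to
\[
\prod_{a \in \ms(\gamma)} [j+a] \;=\; [j+y]^{r+s+1}\, \frac{\prod_{a=1}^{x+y-1}[j+a]}{[j+y]} \;=\; [j+y]^{r+s}\,\frac{[j+x+y-1]!}{[j]!} \;=\; [x+y-1]!\,\qbin{j+x+y-1}{j}\,[j+y]^{r+s}.
\]
Dividing both sides of \eqref{eq:acq-cnx-qpoch} by $[x+y-1]!$ — which is exactly the renormalization built into the definition of $A(\,\cdot\,,\,\cdot\,\mid x,y)_q$ — yields the generating function identity (with denominator $(t;q)_{n+1} = (t;q)_{r+s+x+y}$, matching the closed formula below).

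For the closed form, I would then extract $[t^r]$ using the $q$-binomial theorem~\eqref{eq:q-poch_coeff} applied to $(t;q)_{r+s+x+y}$:
\[
(t;q)_{r+s+x+y} \;=\; \sum_{k\geq 0} (-1)^k q^{\binom{k}{2}} \qbin{r+s+x+y}{k} t^k.
\]
Convolution with the series $\sum_j t^j \qbin{j+x+y-1}{j}[j+y]^{r+s}$ at the coefficient of $t^r$ gives
\[
A(r,s\mid x,y)_q \;=\; \sum_{j=0}^{r} (-1)^{r-j} q^{\binom{r-j}{2}} \qbin{r+s+x+y}{r-j} \qbin{j+x+y-1}{j} [j+y]^{r+s},
\]
which is the claimed formula after noting $(-1)^{r-j}=(-1)^{r+j}$. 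There is no real obstacle in the argument: the whole proposition is a direct specialization of Theorem~\ref{th:acq_cnx}, and the only slightly delicate step is the bookkeeping in the product over $\ms(\gamma)$ — in particular, correctly pulling one factor of $[j+y]$ out of the power $[j+y]^{r+s+1}$ in order to complete the telescoping product into a clean $q$-binomial coefficient.
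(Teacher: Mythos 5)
Your proposal is correct and follows exactly the paper's (one-line) proof: specialize Equations~\eqref{eq:acq-cnx-qpoch} and~\eqref{eq:acq-cnx-1coeff} to the connected configuration $(0^r,1^{y-1},r+s+1,1^{x-1},0^s)$, simplify $\prod_{a\in\ms(\gamma)}[j+a]$ into $[x+y-1]!\qbin{j+x+y-1}{j}[j+y]^{r+s}$, and divide by $[x+y-1]!$. You also correctly observe that the denominator should read $(t;q)_{r+s+x+y}=(t;q)_{n+1}$ (consistent with the coefficient formula~\eqref{eq:a,b_q}), rather than the $(t;q)_{r+s+x+y-1}$ printed in the statement.
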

\begin{proof}
We use Equalities \eqref{eq:acq-cnx-qpoch} and \eqref{eq:acq-cnx-1coeff} on $A_{(0^r,1^{y-1},r+s+1,1^{x-1},0^s)}(q).$
\end{proof}

We note that this Equality \eqref{eq:a,b_q} is a $q$-version of an equation of Carlitz and Scoville: \cite[Eq (3.14)]{CarSco74}.

\subsection{Almost {\L}ukasiewicz configurations}
\label{sec:almost_luka}
In this section, we study the family of \emph{almost {\L}ukasiewicz} configurations $c$. In order to define them, we remind the reader that a \emph{height} of a configuration $c$, at index $k$, was defined in Section \ref{sec:fin_ind} as $H_{c,k} = \sum_{i = 1}^{k}{(c_i-1)}.$
We then say that a configuration $c$ is almost {\L}ukasiewicz if all its heights $H_{c,k}$ are non-negative, except for exactly one, say $H_{c,j}$. The reader can convince itself that this height $H_{c,j}$ must then be equal to $-1$. We call $j$ the \emph{defect} of the configuration $c$.
\smallskip

One such configuration is $c = (1,0,3,0,1)$, pictured below, of defect $j = 2$.

\begin{figure}[!ht]
\centering
    \includegraphics[width=0.35\textwidth]{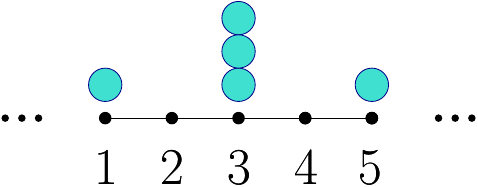}
\caption{An almost {\L}ukasiewicz configuration.\label{fg:alm_luka}}
\end{figure}

\begin{theorem}
\label{th:alm_luka}
Let $c$ be an almost {\L}ukasiewicz configuration, with $n$ balls and defect $j$. Then: $$A_c = \prod_{a \in \ms(c)}{[a]} - \qbin{n+1}{j} \prod_{a \in \ms(c), a < j}{[a]} \prod_{b \in \ms(c), b > j}{[b-j]}.$$
\end{theorem}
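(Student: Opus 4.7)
The plan is to induct on $n$. Given an almost {\L}ukasiewicz configuration $c$ with defect $j$, use the final induction \eqref{eq:fin_ind} with the left-to-right order, so $u_n = \max\{i : c_i > 0\}$. A height analysis shows that only two values of $k$ contribute: $k = j$ (forced by $c_j = 0$ together with $H_{c,j-1} = 0$, which yield $H_{c,j} = -1$) and $k = n$ (obtained either as $k = u_n = n$ when $c_n = 1$, or as the unique $k > u_n$ allowing a valid split, when $u_n < n$). The only case where $k = n$ does not contribute is the degenerate subcase $u_n = n$, $c_n = 2$, which forces $j = n - 1$.

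At $k = j$ both sub-configurations are {\L}ukasiewicz: the left $(c_1, \ldots, c_{j-1})$ trivially, and the right $(c_{j+1}, \ldots, c_n) - \{u_n\}$ (viewed as a configuration on $n - j$ sites) by a direct height calculation. Proposition~\ref{pr:acq_luka} then yields
\[
X_j = q^{u_n - j}\,\qbin{n}{j-1}\,[n+1-u_n]\,\frac{\prod_{a<j}[a]\prod_{b>j}[b-j]}{[u_n-j]}.
\]
At $k = n$ the right sub-configuration is empty, while the left is almost {\L}ukasiewicz on $n - 1$ balls with the same defect $j$. Using $wt(n,u_n) = [u_n]$ and the inductive hypothesis gives
\[
Y_n = \prod_{a \in \ms(c)}[a] \;-\; \frac{[u_n]\,\qbin{n}{j}}{[u_n-j]}\prod_{a<j}[a]\prod_{b>j}[b-j].
\]

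Summing $X_j + Y_n$ and comparing with the target reduces the theorem to the $q$-binomial identity
\[
[u_n]\,\qbin{n}{j} - q^{u_n-j}\,\qbin{n}{j-1}\,[n+1-u_n] = [u_n-j]\,\qbin{n+1}{j},
\]
which follows from $q$-Pascal $\qbin{n+1}{j} = \qbin{n}{j} + q^{n+1-j}\qbin{n}{j-1}$, the decomposition $[u] = [u-j] + q^{u-j}[j]$, and $[j]\qbin{n}{j} = [n+1-j]\qbin{n}{j-1}$. In the degenerate case $c_n = 2$ (so $j = n - 1$), the same identity specialises at $u_n = n$ to $[n]^2 = q\qbin{n}{2} + \qbin{n+1}{2}$, and $X_j$ alone directly matches the claimed formula. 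The main obstacle is the case analysis of which $k$ contribute (particularly the $c_n = 2$ subcase) together with the bookkeeping of the {\L}ukasiewicz products against the shifted products $\prod_{b>j}[b-j]$; the $q$-algebra itself is routine once set up.
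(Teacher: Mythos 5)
Your proof is correct, but it takes a genuinely different route from the paper's. You apply the final induction \eqref{eq:fin_ind} directly to $c$ with the left-to-right order and induct on $n$: the two surviving terms are $k=j$ (the defect, where both halves are {\L}ukasiewicz) and $k=n$ (where the left half is again almost {\L}ukasiewicz with the same defect $j$), and the recursion closes via the $q$-Pascal identity $[u_n]\qbin{n}{j} - q^{u_n-j}[n+1-u_n]\qbin{n}{j-1} = [u_n-j]\qbin{n+1}{j}$. Your handling of the degenerate subcase $u_n=n$, $c_n=2$ (forcing $j=n-1$, with the $k=n$ term vanishing because its left sub-configuration has too few balls) is the right bookkeeping, and that subcase is in fact what terminates the induction. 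The paper avoids induction on $n$ altogether: it sets $d = c + \{1\}$, which is {\L}ukasiewicz since every height increases by one, so $A_d = \prod_{a\in\ms(c)}[a]$ by Proposition \ref{pr:acq_luka}; applying the final induction to $d$ with the added ball dropped last yields exactly two terms, $A_d = A_c + \qbin{n+1}{j}A_{\alpha}A_{\beta}$ with $\alpha=(c_1,\dots,c_{j-1})$ and $\beta=(c_{j+1},\dots,c_n)$ both {\L}ukasiewicz, and solving for $A_c$ finishes in one step with no $q$-binomial manipulation. The paper's argument is shorter and makes the coefficient $\qbin{n+1}{j}$ appear directly rather than as the output of a telescoping identity; yours buys a self-contained recursion that never leaves the class of almost {\L}ukasiewicz configurations, at the cost of the case analysis you flag.
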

\begin{proof}
Let us consider $d$, the configuration obtained from $c$ by adding one ball onto site $1$. We write $\alpha = (c_1,...,c_{j-1})$ and $\beta = (c_{j+1},...,c_n)$, the sub-configurations of $c$ made of its balls left (resp. right) of the site $j$.

Using the final induction \eqref{eq:fin_ind} on $d$, then recalling that $\alpha$ and $\beta$ are {\L}ukasiewicz, we get:
\begin{align*}
A_d &= A_{c + \{ 1 \}} \\
&= [1] \qbin{n+1}{n+1} A_c + [1] \qbin{n+1}{j} A_{\alpha} A_{\beta} \\
&= A_c + \qbin{n+1}{j} \prod_{a \in \ms(\alpha)}{[a]} \prod_{b \in \ms(\beta)}{[b]} \\
&= A_c + \qbin{n+1}{j} \prod_{a \in \ms(c), a < j}{[a]} \prod_{b \in \ms(c), b > j}{[b-j]}.
\end{align*}

On the other hand, the configuration $d$ is {\L}ukasiewicz as well, because its heights $H_{d,j}$ all are one greater than the corresponding $H_{c,j}$, which are at least $-1$. 

We thus have $\displaystyle A_d = \prod_{a \in \ms(d)}{[a]} = [1] \prod_{a \in \ms(c)}{[a]}, \text{ QED.}$
\end{proof}

Applying this theorem to our example yields: $$A_{(1,0,3,0,1)} = [3]^3 [5] - \frac{[6][5][3]}{[2]} = 2q+6q^2+12q^3+16q^4+18q^5+16q^6+12q^7+6q^8+2q^9.$$

\section{Weakly {\L}ukasiewicz configurations}
\label{sec:wLuka}
In this section, we take $q > 0$: any ball has both a non-zero chance to bounce left, and to bounce right. We then define a class of configurations $c$ that behave nicely enough for their $A_c$ to be given a closed formula. More precisely, if the dynamic starts from such a configuration $c$ and end in the usual interval $[1;n]$, then we can guarantee that the set of fallen balls has been connected at all times. This will allow us to use the final induction \eqref{eq:fin_ind} with only two terms, leading to Theorem \ref{th:w_luka}.
\begin{definition}
A configuration is \emph{weakly {\L}ukasiewicz} iff, when dropping its balls left-to-right, either the support is guaranteed to be an interval at all times, or some ball falls outside of $[1;n].$
\end{definition}
\begin{proposition}[Characterisation of weakly {\L}ukasiewicz configurations]
\label{pr:wluka_carac}
A configuration $c$, of left-to-right order $u_1,u_2...,u_n$, is weakly {\L}ukasiewicz if and only if, for all $j \in [2;n],$ we have $u_j \leq max(u_{j-1}+1,j).$
\end{proposition}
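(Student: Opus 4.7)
My plan is to prove both implications by induction on the number of balls already dropped, tracking how the support evolves under left-to-right dropping. The essential observation is that, so long as the support $[a_j,b_j]$ is an interval, a newly dropped ball either lands on the empty site $b_j+1$ (if $u_{j+1}=b_j+1$), or bounces inside the contiguous occupied block and stops at $a_j-1$, at $b_j+1$, or outside $[1;n]$. Thus a gap can only be created when a ball is dropped on a site strictly beyond $b_j+1$.

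For the ($\Leftarrow$) implication I will maintain the invariant: if after the first $j$ drops no ball has landed outside $[1;n]$, then the support is an interval $[a_j,b_j]$ with $b_j\geq\max(u_j,j)$. The base case $j=1$ is immediate, since $b_1=u_1\geq 1$. For the inductive step, the hypothesis rewrites as $u_{j+1}\leq\max(u_j,j)+1\leq b_j+1$, so the ball either lands at $u_{j+1}=b_j+1$ or bounces inside $[a_j,b_j]$; in every surviving branch the support remains an interval and a short case check propagates $b_{j+1}\geq\max(u_{j+1},j+1)$, distinguishing whether $u_{j+1}=b_j+1$ (then $b_{j+1}=u_{j+1}\geq i+1$), the ball bounces right (then $b_{j+1}=b_j+1$), or the ball bounces left (then $a_{j+1}=a_j-1\geq 1$ forces $b_j\geq j+1$).

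For the ($\Rightarrow$) implication I will argue by contraposition: pick the smallest $j$ with $u_{j+1}>\max(u_j+1,j+1)$, i.e.\ $u_{j+1}\geq\max(u_j,j)+2$. I will exhibit a positive-probability outcome (using $q>0$) that keeps all balls inside $[1;n]$ during the first $j$ drops yet attains the minimum $b_j=\max(u_j,j)$, so that the $(j{+}1)$-th ball lands at the empty site $u_{j+1}$ strictly beyond $b_j+1$, creating a hole in the support. The scheduling rule is: at every earlier drop, bounce left whenever the current leftmost occupied site $a_i\geq 2$, and bounce right otherwise. An induction on $i<j$, using the fact that the condition holds for all earlier indices by minimality, shows this rule keeps exactly $b_i=\max(u_i,i)$ with no ball ever exiting $[1;n]$. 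The main obstacle is the achievability of this minimum; the delicate subcase is $a_i=1$, where a left bounce would exit $[1;n]$ so a right bounce is forced, but then $b_i=i$ and $u_{i+1}\leq b_i=i$ conspire to give $b_{i+1}=i+1=\max(u_{i+1},i+1)$, tightly preserving the invariant. Once $j$ is reached, the gap-creating drop has positive probability and all earlier balls are inside $[1;n]$, contradicting the weakly {\L}ukasiewicz property.
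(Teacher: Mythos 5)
Your proof is correct and takes essentially the same approach as the paper's: both directions reduce to tracking the rightmost occupied site $b_j$ after $j$ left-to-right drops, showing that $b_j\geq\max(u_j,j)$ always holds and that this bound is attained by packing balls leftward. If anything, your explicit invariant $b_j=\max(u_j,j)$ in the contrapositive direction is slightly more careful than the paper's assertion that the first $j-1$ balls ``might fall onto $[1;j-1]$'', which is not literally achievable when some earlier ball starts at a site $u_i>i$ (though the paper's conclusion still stands).
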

\begin{proof}
We want to show that, supposing all the balls fall into $[1;n]$, the support is an interval at all times if and only if we have $u_j \leq max(u_{j-1}+1,j)$ for all $j \geq 2$. 
\smallskip

For the direct implication: suppose that, after the fall of $u_j$, the support has two intervals, whereas it only had one before that fall. This means that the $j-1$ previous ball did not fall on the site $u_j$, nor next to it. Since these $j-1$ previous balls started on the left of $u_j$, and none landed on $u_j$, they must have finished on the left of $u_j$, within a set of sites $I \subseteq [1;u_j-2].$

In order to have the space for all of these balls in this interval $I$, we need $j-1 \leq |I| \leq u_j-2$. Furthermore, for the previous ball $u_{j-1}$ to fall onto $I$, it needed to start within $I$. Therefore, $u_{j-1} \leq u_j-2$, and $u_j > max(u_{j-1}+1,j).$
\smallskip

Conversely, suppose that there is a ball $u_j > max(u_{j-1}+1,j).$ Then the first $j-1$ balls might fall onto $[1;j-1]$. In that case, $j-1$ and $u_j$ belong to the support, but not $j$, even though $j-1 < j < u_j.$ Therefore, the support might not be an interval after the fall of $u_j$. QED.
\end{proof}

Intuitively, in a weakly {\L}ukasiewicz configurations dropped left to right, each ball can only end next to the previous balls. In a connected configuration, the balls start next to each other, so they naturally end next to each other. In a {\L}ukasiewicz configuration, each ball end one site right of the previous one. 

\begin{lemma}
Let $c$ be a configuration, and $u_1,u_2...,u_n$ be its left-to-right order. Then:
\begin{itemize}
    \item $c$ is connected iff, for all $j \geq 2$, we have $u_j \leq u_{j-1}+1$.
    \item $c$ is \L ukasiewicz iff, for all $j \geq 2$, we have $u_j \leq j$.
\end{itemize}
\end{lemma}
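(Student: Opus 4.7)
The plan is to reformulate each of the two definitions directly in terms of the sorted sequence $u_1 \leq u_2 \leq \cdots \leq u_n$, after which both equivalences become essentially tautologies.

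For the connectedness part, I would observe that the support of $c$ equals the set of distinct values appearing in $\ms(c) = \{u_1, \ldots, u_n\}$, so $c$ is connected exactly when this set is the interval $[u_1;u_n]$. For a non-decreasing integer sequence, the set of values equals the full interval $[u_1;u_n]$ if and only if no two consecutive entries jump by more than one: if $u_j - u_{j-1} \geq 2$ then $u_{j-1}+1$ never appears and the support has a hole, while if every gap satisfies $u_j - u_{j-1} \leq 1$ every integer between $u_1$ and $u_n$ must appear somewhere in the sequence. This gives the first equivalence.

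For the {\L}ukasiewicz part, the condition $H_{c,k} \geq 0$ unpacks to $\sum_{i=1}^{k} c_i \geq k$, which simply says that at least $k$ balls occupy sites in $[1;k]$. Since $u_k$ is by definition the $k$-th smallest element of $\ms(c)$, the statement ``at least $k$ balls lie in $[1;k]$'' is literally equivalent to $u_k \leq k$. Applying this for every $k \in [1;n]$ yields that $c$ is {\L}ukasiewicz iff $u_j \leq j$ for all $j$ in that range.

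There is no genuine obstacle here: both items reduce to definition-unwinding once one passes from the configuration $c$ to the sorted list of its balls. The one mild subtlety I would flag is the $j = 1$ boundary of the {\L}ukasiewicz characterization: the condition $u_1 \leq 1$ (equivalently $c_1 \geq 1$) is not implied by the inequalities $u_j \leq j$ for $j \geq 2$, so the range ``for all $j \geq 2$'' in the lemma should presumably read ``for all $j \geq 1$'' for the equivalence to hold as stated.
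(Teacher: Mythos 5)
Your proof is correct and takes essentially the same route as the paper, whose entire proof is the one-line assertion that the lemma ``directly follows from the definitions''; you simply carry out the definition-unwinding explicitly. Your flag about the $j=1$ boundary of the {\L}ukasiewicz item is a genuine catch: the configuration $c=(0,2)$ has left-to-right order $u_1=u_2=2$, satisfies $u_j\leq j$ for all $j\geq 2$, yet has $H_{c,1}=-1$ and so is not {\L}ukasiewicz, meaning the quantifier in the second item should indeed be $j\geq 1$ (only the forward implication, which is what the paper actually uses to conclude that {\L}ukasiewicz configurations are weakly {\L}ukasiewicz, holds with $j\geq 2$).
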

\begin{proof}
    This result directly follows from the definitions of connected configurations and \L ukasiewicz configurations.
\end{proof}

Thus, the class of weakly {\L}ukasiewicz configurations contains both the connected and the {\L}ukasiewicz ones. However, some weakly {\L}ukasiewicz configurations are neither connected nor {\L}ukasiewicz, such as $c = (0,3,0,2,0).$

\begin{figure}[!ht]
\centering
    \includegraphics[width=0.35\textwidth]{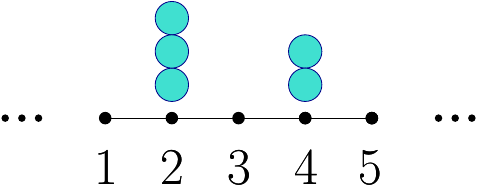}
\caption{A weakly {\L}ukasiewicz configuration.\label{fg:wluka}}
\end{figure}
\medskip

This class of configurations is stable under the operations of removing the rightmost ball and shifting all the balls to the left:
\begin{lemma}
\label{lm:wluka_stable}
Let $c = (0^k,\gamma,0^{n-m-k})$ be a weakly {\L}ukasiewicz configuration. Then the configurations $d = c-\{ k+m \}$ and $c^{(i)} := (0^i,\gamma,0^{n-m-i})$, where $i \leq k$, are also weakly {\L}ukasiewicz.
\end{lemma}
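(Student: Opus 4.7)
The plan is to invoke the characterization of Proposition~\ref{pr:wluka_carac} and track how the left-to-right order transforms under each of the two operations. I will write $u_1 \leq \cdots \leq u_n$ for the left-to-right order of $c$, so by hypothesis $u_j \leq \max(u_{j-1}+1,\, j)$ holds for every $j \in [2;n]$. Note that all balls of $c$ sit at sites in $[k+1;k+m]$, and in particular $u_n = k+m$ since $\gamma_m > 0$ makes $k+m$ the largest element of $\ms(c)$.

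For the configuration $d = c - \{k+m\}$, I would observe that, because $u_n = k+m$, the left-to-right order of $d$ is precisely $u_1, \ldots, u_{n-1}$. Verifying that $d$ is weakly \L ukasiewicz then reduces to the inequalities $u_j \leq \max(u_{j-1}+1,\, j)$ for $j \in [2;\, n-1]$, which are a subset of those already known to hold for $c$. For $c^{(i)}$ with $i \leq k$, the construction shifts all balls of $c$ uniformly to the left by $k-i \geq 0$, so the left-to-right order of $c^{(i)}$ is $u'_j := u_j - (k-i)$. Subtracting $k-i$ from both sides of the inequality for $c$ then gives
\begin{equation*}
u'_j \;\leq\; \max\bigl(u'_{j-1}+1,\ j-(k-i)\bigr) \;\leq\; \max(u'_{j-1}+1,\, j),
\end{equation*}
where the second inequality uses $k \geq i$. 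This is exactly the weakly \L ukasiewicz criterion for $c^{(i)}$.

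I do not foresee a genuine obstacle: once Proposition~\ref{pr:wluka_carac} is available, both stability claims reduce to elementary manipulations of the $u$-sequence. The only point requiring a moment of care is correctly matching up the left-to-right orders of the modified configurations with that of $c$ — confirming that removing a ball from site $k+m$ truncates the $u$-sequence at its last term, and that translating the block $\gamma$ leftwards corresponds exactly to subtracting a constant from every $u_j$.
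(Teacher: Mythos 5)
Your proposal is correct and follows essentially the same route as the paper: both invoke Proposition~\ref{pr:wluka_carac}, note that $u_n = k+m$ so that removing that ball simply truncates the left-to-right order, and handle the shift $c^{(i)}$ by subtracting the constant $k-i$ from every $u_j$ and using $j-(k-i)\leq j$. Your write-up is in fact slightly more careful than the paper's at the shift step (the paper's displayed formula contains a typo, writing $u_j-(k-j)$ where $u_j-(k-i)$ is meant, and does not spell out the inequality $\max(u'_{j-1}+1,\,j-(k-i))\leq\max(u'_{j-1}+1,\,j)$ that you make explicit).
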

\begin{proof}
Thanks to Proposition \ref{pr:wluka_carac}, characterising weakly {\L}ukasiewicz configurations, we know that the left-to-right order $u_1,u_2...,u_n$ of $c$ satisfies $u_j \leq max(u_{j-1}+1,j)$ for all $j \in [2;n]$. We notice that $u_n$ is equal to $k + m$, i.e. the ball that we removed. The configuration $d$ thus has left-to-right order $u_1,u_2...,u_{n-1}$ for these same $u_j$. Therefore, the relations  $u_j \leq max(u_{j-1}+1,j)$ are still satisfied, for all $j \in [2;n-1]$.

As for a configuration $c^{(i)}$ with $i < k$, of left-to-right order $v_1, v_2..., v_n,$ we have the equality $v_j = u_j - (k-j)$ for all $j$. Therefore, $v_j \leq max(v_{j-1}+1,j).$ By the converse direction of Proposition \ref{pr:wluka_carac}, the configuration $c^{(i)}$ is weakly {\L}ukasiewicz.
\end{proof}

\begin{definition}
Let $k$ be an integer. We say that two series $\sum_{i \geq 0}{t^i a_i}$ and $\sum_{i \geq 0}{t^i b_i}$ are \emph{equal mod $t^k$} if, for all $i < k$, we have $a_i = b_i.$
\end{definition}

The previous lemma allows us to extend Theorem \ref{th:acq_cnx} for connected configurations onto this new class:
\thwluka

\begin{proof}
We remark that, in the proof of Theorem \ref{th:acq_cnx}, the only properties we used from the class of connected configurations were:
\begin{itemize}
    \item The right-most ball has to fall on either the left-most or right-most site;
    \item A connected configuration stays connected without its right-most ball.
    \item A connected configuration $ {c=(0^k,\gamma,0^{n-m-k})}$ stays connected when we move its core part $\gamma$ to the left.
\end{itemize}
Lemma \ref{lm:wluka_stable} ensures these properties are still valid on the class of weakly {\L}ukasiewicz configurations. Therefore, the proof of Theorem \ref{th:acq_cnx} still holds mod $t^{k+1}$.
\end{proof}
We can thus compute $A_{(0,3,0,2,0)} = [2]^3 [4]^2 - [3]^2 [6],$ similarly to $A_{(0,1,2,2,0)}.$

\section{One-hole configurations}
\label{sec:two_connec}

Seeing as a connected configuration is a configuration whose core part has no hole, it is natural to ask about the remixed Eulerian number of a configuration whose core part has exactly one hole. To this end, we define here the class of \emph{one-hole} configurations.

In order to provide a formula for their remixed Eulerian number in Theorem \ref{th:one_hole}, we slightly alter the generating function given by Theorem \ref{th:acq_cnx} for connected configurations.

\subsection{Enumeration of one-hole configurations}
\begin{definition}
Let $c$ be a configuration, with core part $\gamma$ as defined in Remark \ref{df:core_part}. We then define the \emph{corrective series} $P_\gamma$ with the following:
\begin{equation*}
    P_\gamma(q,t) = (t;q)_{n+1} \left( \sum_{j \geq 0}{t^j \prod_{a \in \ms(\gamma)}{[j+a]}} \right) - \sum_{i=0}^{n-m}{t^i A_{(0^i,\gamma,0^{n-\ell-m-1-i})}}.
\end{equation*}

Moving around the terms to match the shape of Equation \eqref{eq:acq-cnx-qpoch}, we get:
\begin{equation*}
    \frac{P_\gamma(q,t) + \sum_{i=0}^{n-m}{t^i A_{(0^i,\gamma,0^{n-\ell-m-1-i})}}}{(t;q)_{n+1}} = \sum_{j \geq 0}{t^j \prod_{a \in \ms(\gamma)}{[j+a]}},
\end{equation*}
\end{definition}

In this section, we will compute $P_\gamma(q,t)$ for some specific class of $\gamma$, proving along the way that they are polynomials.

We can rewrite Theorem \ref{th:acq_cnx} for connected configurations as: if $\gamma$ has at least one ball per site, i.e. $c$ is connected, then $P_\gamma(q,t) = 0$. Similarly, Theorem \ref{th:w_luka} over weakly {\L}ukasiewicz configurations becomes: $P_\gamma(q,t)$ is divisible by $t^{k+1}$ if $c$ starts with $k$ zeroes while being weakly {\L}ukasiewicz.

\begin{definition}
A configuration $c$ with core $\gamma$ is \emph{one-hole} if $\gamma$ has exactly one empty site. 
 \end{definition}

In this case, $\gamma$ has the form $(\alpha,0,\beta)$, with $n$ total balls. The sub-configuration $\alpha =  (\alpha_1,...,\alpha_{\ell})$ has $p$ balls on $\ell > 0$ sites, each site having at least one ball. Similarly, the sub-configuration $\beta = (\beta_1,...,\beta_m)$ comprises $r > 0$ balls on $m > 0$ sites, each site having at least one ball. We will use these notations for the remainder of this section.

One such configuration is $c = (0,2,1,0,3,0)$, with $\alpha = (2,1), \beta = (3), \ell = 2, m = 1, p~=~3,$
 $r~=~3$, cf Figure \ref{fg:one-hole}.
 
\begin{figure}[!ht]
\centering
    \includegraphics[width=0.4\textwidth]{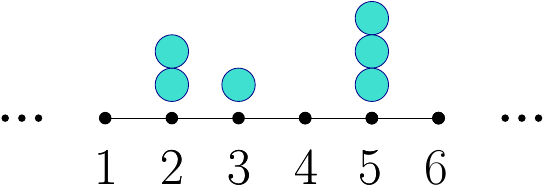}
\caption{A one-hole configuration with core $(2,1,0,3)$.\label{fg:one-hole}}
\end{figure}

For these one-hole configurations $c$ with core part $\gamma$, we give a closed formula of the corrective series $P_\gamma(q,t)$:
\begin{theorem}
\label{th:one_hole}
Let $\gamma$ be a core part with exactly one empty site. Then:
\begin{equation}
    \label{eq:acq-1hole}
    P_\gamma(q,t) = t^{p-\ell} \left( \prod_{a \in \ms(\alpha)}{[\ell+1-a]q^{a-\ell}} \right) \left( \prod_{b \in \ms(\beta)}{[b]} \right)
    \left( \sum_{i = 0}^{r}{(-1)^i t^{i}q^{\binom{p}{2}+pi+\binom{i+1}{2}} \qbin{n+1}{r-i}} \right).
\end{equation}
Extracting a single coefficient of the numerator, we get that for all $i \in [0;n-\ell-m-1]$:
\begin{align}
    A_{(0^i,\gamma,0^{n-\ell-m-1-i})} &= \sum_{j = 0}^{i}{(-1)^{i+j}q^{\binom{i-j}{2}}\qbin{n+1}{i-j}} \prod_{a \in \ms(\gamma)}{[j+a]} \\
    &+ [\! [i \geq p-\ell ]\! ] (-1)^{i+p+\ell+1} q^{\binom{i+\ell}{2}} \! \prod_{a \in \ms(\alpha)}{\! [\ell+1-a]q^{a-\ell}} \hspace{-5pt} \prod_{b \in \ms(\beta)}{\hspace{-5pt} [b]} \qbin{n+1}{i+\ell+1}. \nonumber
\end{align}
where $[\! [i \geq p-\ell ]\! ]$ is $1$ if $i \geq p-\ell$, and $0$ otherwise.
\end{theorem}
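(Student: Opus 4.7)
The plan is to extend the generating-function argument of Theorem \ref{th:acq_cnx}, now tracking the extra contributions coming from the hole in $\gamma = (\alpha, 0, \beta)$. My first step would be to note, using Proposition \ref{pr:wluka_carac}, that $(0^i, \gamma, 0^{n-\ell-m-1-i})$ is weakly {\L}ukasiewicz precisely when $i \leq p - \ell - 1$. Hence Theorem \ref{th:w_luka} already guarantees that $P_\gamma(q,t)$ is divisible by $t^{p-\ell}$, matching the leading factor in the target formula; the content of Theorem \ref{th:one_hole} then reduces to determining $P_\gamma(q,t)/t^{p-\ell}$, equivalently the values $A_{c^{(i)}}$ for $i \geq p - \ell$.

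I would then induct on $r$, the number of balls in $\beta$. In the base case $r = m$ (so $\beta = (1,\ldots,1)$), every admissible $c^{(i)}$ is weakly {\L}ukasiewicz, so Theorem \ref{th:w_luka} gives the complete generating series and a direct calculation confirms the target formula. For the inductive step $r > m$, I would apply the final induction \eqref{eq:fin_ind} to each $A_{c^{(i)}}$ with $u_n$ taken as the rightmost ball, at position $i + \ell + 1 + m$. The reduced configuration $d^{(i)} := c^{(i)} - \{u_n\}$ is again a one-hole configuration with $r - 1$ balls in its right cluster, so the inductive hypothesis supplies its generating series. Summing the resulting recursion against $t^i$ and performing the $q$-Pochhammer telescoping used in the proof of Theorem \ref{th:acq_cnx} should reduce the identity for $\gamma$ to the one for $\gamma' = (\alpha, 0, \beta - \{m\})$, up to an explicit correction term that must match the stated formula for $P_\gamma$. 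Finally, the coefficient of $t^i$ is extracted via the $q$-binomial theorem \eqref{eq:q-poch_coeff}, exactly as in Theorem \ref{th:acq_cnx}.

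The main obstacle lies in the inductive step: because of the hole, the set $\{k : H_{c^{(i)} - \{u_n\}, k} = 0\}$ can contain indices beyond those present in the connected case, each contributing a weight $q^{u_n - k} \qbin{n}{k-1} [n + 1 - u_n]$ times a product $A_\alpha \cdot A_{\beta'}$ of {\L}ukasiewicz values (by Proposition \ref{pr:acq_luka}). Collecting all these contributions and matching the nontrivial $q$-exponent $\binom{p}{2} + p i + \binom{i+1}{2}$ in the target against the inductive hypothesis is where the bookkeeping gets delicate; recognising the factor $\prod_{a \in \ms(\alpha)}[\ell+1-a]\,q^{a-\ell}$ as the reverse-{\L}ukasiewicz value of $\alpha$, via Proposition \ref{pr:reverse}, should provide useful conceptual guidance.
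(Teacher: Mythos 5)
Your opening observation is sound and matches the paper: by Proposition \ref{pr:wluka_carac} the configuration $(0^i,\gamma,0^{n-\ell-m-1-i})$ is weakly {\L}ukasiewicz exactly when $i\le p-\ell-1$, so Theorem \ref{th:w_luka} yields the divisibility of $P_\gamma$ by $t^{p-\ell}$. But from there the proposal has a genuine gap at its central step. You propose to apply the final induction \eqref{eq:fin_ind} directly to each $A_{c^{(i)}}$ and to ``collect'' the extra terms created by the hole, but that collection is precisely the hard content of the theorem and you leave it undone. Moreover your description of those extra terms is not accurate: at a zero of the height function lying inside $\alpha$, the left factor is a truncation of $(0^i,\alpha_1,\dots)$, which for $i>0$ has negative height at its first site and hence is not {\L}ukasiewicz, and for $i\ge p-\ell$ the right factor fails to be {\L}ukasiewicz as well, so Proposition \ref{pr:acq_luka} does not evaluate them. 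The base case $r=m$ is also not free: Theorem \ref{th:w_luka} does determine every $A_{c^{(i)}}$ in that case, but $P_\gamma$ is then the tail (degrees $\ge p-\ell$) of the series $(t;q)_{n+1}\sum_j t^j\prod_{a\in\ms(\gamma)}[j+a]$, a nonzero polynomial whose coefficients must still be matched against the claimed product formula --- a nontrivial $q$-binomial computation that ``a direct calculation'' does not dispatch.

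The paper avoids all of this by never applying the final induction to a configuration containing the hole. It first proves a reduction lemma (Lemma \ref{lm:1hole_2cols_enough}), based on the multi-step relation \eqref{eq:big_step} and Theorem \ref{th:acq_cnx}, showing that $P_{(\alpha,0,\beta)}$ equals $P_{(p,0,r)}$ times the explicit products appearing in the statement; this collapses the problem to $\gamma=(p,0,1^r)$. It then inducts on $r$ using the elementary splitting $[j+r+2]=[r]+q^r[j+2]$ inside the generating function, which produces the recursion $Q_r=q^r\qbin{p+r}{r}+(1-tq^{p+r})Q_{r-1}$ with $Q_0=0$; the only individual value needed, $A_{(0^{p-2},p,0,1^{r-1})}$, comes from the almost-{\L}ukasiewicz formula (Theorem \ref{th:alm_luka}) applied to the reverse configuration via Proposition \ref{pr:reverse} (Lemma \ref{lm:correctif_power_minus_bin}). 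To salvage your plan you would need analogues of both lemmas --- in particular something playing the role of Lemma \ref{lm:1hole_2cols_enough} --- before the bookkeeping in your inductive step becomes tractable.
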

We dedicate the next subsection to the proof of this theorem.
\smallskip

On our example $c = (0,2,1,0,3,0)$, we thus have:
\begin{align*}
&A_{(0,2,1,0,3,0)} = -[7][1]^2[2][4]^3+[2]^2[3][5]^3-q^{3-2}[2]^2\qbin{7}{4} = [2]^2[3][5]^3 - [2][4]^3[7] - q\frac{[2][5][6][7]}{[3]} \\
&= 2q^2+8q^3+19q^4+36q^5+56q^6+72q^7+78q^8+72q^9+56q^{10}+36q^{11}+19q^{12}+8q^{13}+2q^{14}.
\end{align*}
\medskip

\begin{proposition}
Let $c$ be an one-hole configuration. Then either $c$ or its reverse $\overline{c}$ is weakly {\L}ukasiewicz.
\end{proposition}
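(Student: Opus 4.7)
The plan is to apply Proposition \ref{pr:wluka_carac}, which characterises weakly {\L}ukasiewicz configurations by the single inequality $u_j \leq \max(u_{j-1}+1, j)$ on the left-to-right order $u_1, \ldots, u_n$. I would write the one-hole configuration as $c = (0^k, \alpha, 0, \beta, 0^{n-\ell-m-1-k})$ and test this inequality index by index, using the number of leading zeros $k$ as the free parameter.

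The key observation is that within $\alpha$ (respectively $\beta$), consecutive terms of the left-to-right order differ by at most $1$: this is because each entry of $\alpha$ and of $\beta$ is at least $1$, so the order traverses a block of consecutive sites with no gap. Hence the condition $u_j \leq \max(u_{j-1}+1, j)$ is automatic for $j \in [2, p] \cup [p+2, n]$, and the only index that can possibly fail is $j = p+1$, where the order jumps over the hole: $u_p = k+\ell$ (since $\alpha_\ell \geq 1$) and $u_{p+1} = k+\ell+2$ (since $\beta_1 \geq 1$).

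At this single critical index, the condition $k+\ell+2 \leq \max(k+\ell+1, p+1)$ forces $p+1 \geq k+\ell+2$, i.e. $k \leq p-\ell-1$. So $c$ is weakly {\L}ukasiewicz if and only if $k \leq p-\ell-1$. Running the same analysis on $\overline{c}$, whose number of leading zeros is $n-\ell-m-1-k$ and whose ``left block'' is $\overline{\beta}$ with $r$ balls on $m$ sites, the analogous inequality at the jump becomes $n - \ell - m - 1 - k \leq r - m - 1$; substituting $n = p+r$ reduces this to $k \geq p-\ell$.

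The two integer conditions $k \leq p-\ell-1$ and $k \geq p-\ell$ are complementary, so exactly one of $c$ and $\overline{c}$ is weakly {\L}ukasiewicz, which is stronger than what is claimed. I do not anticipate any real obstacle here: the argument is a bookkeeping exercise built on the characterisation from Proposition \ref{pr:wluka_carac}, the only subtlety being the careful identification of $u_p$ and $u_{p+1}$ which relies on the ``at least one ball per site'' hypothesis that is built into the definition of the core sub-configurations $\alpha$ and $\beta$.
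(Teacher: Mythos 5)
Your proof is correct and follows essentially the same route as the paper's: both invoke the characterisation of Proposition \ref{pr:wluka_carac}, observe that the only index at which the inequality can fail is $j=p+1$ (the jump over the hole), and reduce the question to comparing $p$ with the position of the hole, with reversal handling the complementary case. Your bookkeeping with the leading-zero count $k$ matches the paper's ``assume $p\geq k$ up to reversal'' (where the paper's $k$ is the absolute site of the hole, i.e.\ your $k+\ell+1$), and your added remark that exactly one of $c$, $\overline{c}$ is weakly {\L}ukasiewicz is a correct minor strengthening.
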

\begin{proof}[Proof]
    Say the hole is in site $k$, and that they are $p$ balls on its left. We can assume, up to reversing left and right, that $p \geq k$. Then, for the left-to-right order $u_1,u_2...,u_n$, we have $u_j \leq u_{j-1}+1$ for all $j \geq 2$ except $p+1$. For this value $j = p+1$, we have $u_j = k+1 \leq p+1 = j$, as this ball is the leftmost one among those starting right of the hole. 
    
    This configuration thus satisfies the characterisation of weakly {\L}ukasiewicz configurations given by Proposition \ref{pr:wluka_carac}.
\end{proof}

Therefore, after maybe using Proposition \ref{pr:reverse} over reverse configurations, we have a formula for $A_c$ using Theorem \ref{th:w_luka} over weakly {\L}ukasiewicz configurations. However, the formula in Theorem \ref{th:one_hole} is still of interest, as its sum ranges over far fewer indices in some cases, and its proof uses new techniques.

As an example, on the configuration $(0,0,3,0,r,0^{r-2})$, this theorem yields a formula with only three terms, instead of the $r-2$ terms given by Theorem \ref{th:w_luka} over the weakly {\L}ukasiewicz reverse configuration $(0^{r-2},r,0,3,0,0)$.

\subsection{Proof of Theorem \ref{th:one_hole}}
We write $P_\gamma$ instead of $P_\gamma(q,t),$ seeing that the values for $q$ and $t$ are fixed.

We break up this proof by first stating two lemmas. The first one links the corrective series $P_\gamma$ of all the one-hole configurations that have the same amount of balls left and right of the hole, written respectively $p$ and $r$. The second one gives an explicit formula for the corrective series of a specifically chosen configuration, with given $p$ and $r$, using Theorem \ref{th:alm_luka} over almost {\L}ukasiewicz configurations.
\medskip

We first assert that, at fixed $p$ and $r$, we only have to prove Theorem \ref{th:one_hole} for a single pair $(\alpha,\beta)$, thanks to the following lemma:
\begin{lemma}
\label{lm:1hole_2cols_enough}
 We have the following equality:
\begin{align}
P_{(\alpha,0,\beta)} &= P_{(p,0,r)} \prod_{a \in \ms(\overline{\alpha})}{[a]q^{-a+1}} \prod_{b \in \ms(\beta)}{[b]} \nonumber \\
&= P_{(p,0,r)} \prod_{a \in \ms({\alpha})}{[\ell+1-a]q^{a-\ell}} \prod_{b \in \ms(\beta)}{[b]}.
\end{align}
\end{lemma}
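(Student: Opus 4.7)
I would prove Lemma \ref{lm:1hole_2cols_enough} by reducing $(\alpha,\beta)$ to $((p),(r))$ through iterated single-ball moves within $\alpha$ and within $\beta$, each contributing an explicit multiplicative factor to the corrective series $P_\gamma$. The key observation is that both factors $\prod_{a \in \ms(\overline{\alpha})}[a]q^{-a+1}$ and $\prod_{b \in \ms(\beta)}[b]$ are Lukasiewicz-type $A$-values (by Proposition \ref{pr:acq_luka}), suggesting that in a one-hole configuration the $\alpha$- and $\beta$-parts settle independently in Lukasiewicz-like fashion, with each single-ball move contributing the corresponding ratio of $q$-integers.

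\textbf{Main step on $\beta$.} I would apply the big-step induction \eqref{eq:big_step} to a ball in the last column of $\beta$, at site $i+\ell+1+m$ in the configuration $c^{(i)}=(0^i,\alpha,0,\beta,0^{N-i})$. Removing this ball, the nearest left hole is the unique hole of the core (at distance $m$) and the nearest right hole is at distance $1$, so \eqref{eq:big_step} expresses $A_{c^{(i)}}$ as a combination of $A$-values for configurations where the ball has been relocated either into the core's hole or to the right of $\beta$. Summing against $t^i$ and comparing with the definition of $P_\gamma$, I expect a recursion of the form $P_{(\alpha,0,\beta)} = \bigl([m]/[m-1]\bigr)\, P_{(\alpha,0,\beta')}$, where $\beta'$ differs from $\beta$ by moving one ball from column $m$ to column $m-1$. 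Iterating this move shrinks $\beta$ to a single column $(r)$ at position $1$ and accumulates exactly the factor $\prod_{b\in\ms(\beta)}[b]$.

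\textbf{Symmetric step for $\alpha$.} An analogous argument applied to balls in the leftmost columns of $\alpha$, moving them rightward toward the core's hole, yields the factor $\prod_{a\in\ms(\overline{\alpha})}[a]q^{-a+1}$; here the $q^{-a+1}$ weight arises naturally since the moves proceed from left to right against the natural Lukasiewicz drop order. Alternatively, one can invoke Proposition \ref{pr:reverse}: reversing a one-hole configuration swaps the roles of $\alpha$ and $\beta$ (up to the substitution $q\mapsto 1/q$), so the $\beta$-reduction applied to the reversal becomes the $\alpha$-reduction for the original, and the required $q$-powers fall out of the reversal identity.

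\textbf{Main obstacle.} The delicate point is managing the recursion across boundary cases, namely when a single-ball move turns a non-trivial column of $\beta$ into an empty one (for example when $\beta_m=1$ and the sole ball of column $m$ is moved to column $m-1$). Such a move shortens the core by one and may temporarily move the configuration outside the one-hole class, altering the shape of $P_\gamma$ itself. I expect this will require bridging through the connected-configuration identity (Theorem \ref{th:acq_cnx}) or the weakly Lukasiewicz identity (Theorem \ref{th:w_luka}) to verify that the multiplicative factor still applies at the transition, and some care will be needed to align the normalising $(t;q)_{n+1}$ factors across the change of core length.
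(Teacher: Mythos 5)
Your overall strategy is the same as the paper's: reduce $(\alpha,\beta)$ to $((p),(r))$ by moving one ball at a time, with each move multiplying $P_\gamma$ by an explicit ratio of $q$-integers, handling separately the boundary case where a column empties and the core shrinks (the paper does this via the identity $P_{(0,\delta,\dots)}=t^{-1}P_{(\delta,\dots)}$, proved with an index shift and Proposition \ref{pr:acq_luka}). However, your central step does not produce the recursion you claim. Applying \eqref{eq:big_step} to a ball in the last column of $\beta$ with endpoints at the core's hole (distance $m$ to the left) and at the first empty site to the right (distance $1$) relocates that ball either \emph{into the hole} (giving a connected configuration) or \emph{past the end of $\beta$} (giving a one-hole configuration whose $\beta$-part is \emph{longer}). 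Neither of these is the configuration $\beta'$ with a ball moved from column $m$ to column $m-1$, so the relation $P_{(\alpha,0,\beta)}=([m]/[m-1])\,P_{(\alpha,0,\beta')}$ does not follow from the step as described; moreover the step points in the wrong direction for a reduction. What is actually needed is a three-term relation among the ball placed at the source column, at the target column, and at the hole, obtained by centring \eqref{eq:big_step} at one of the two former sites with the other two as endpoints (the relation is valid even though those endpoints are occupied, since only the interior sites must be filled). The paper does exactly this with $L$, $R$ and the connected configuration $M$.

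The second, more serious omission is that you never say how the generating-function half of $P_\gamma$, namely $(t;q)_{n+1}\sum_j t^j\prod_{a\in\ms(\gamma)}[j+a]$, transforms under a ball move. The reason $P_\gamma$ is multiplicative under these moves is that both halves of its definition satisfy the \emph{same} three-term linear relation: the $A$-sums by \eqref{eq:big_step}, and the products $\prod_a[j+a]$ by a $q$-integer identity such as $q^{\ell-1}[j+1]-[\ell][j+\ell]=-[\ell-1][j+\ell+1]$, with the connected configuration $M$ (ball dropped into the hole) serving as the pivot that converts the leftover generating-function term back into an $A$-sum via Theorem \ref{th:acq_cnx}. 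You invoke Theorem \ref{th:acq_cnx} only as a patch for boundary cases, but it is in fact the engine of every induction step; without it, ``summing against $t^i$ and comparing with the definition of $P_\gamma$'' does not close. Your suggestion to treat the $\alpha$-side by reversal via Proposition \ref{pr:reverse} is plausible but also needs care: under reversal the parameter $t$ and the truncation range of the sum $\sum_i t^i A_{(0^i,\gamma,\dots)}$ transform nontrivially, which is presumably why the paper runs the same direct argument on the $\alpha$-side instead.
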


\begin{proof}
    We prove this lemma by induction on the amount of balls in $(\alpha_1,...,\alpha_{\ell-1})$, i.e. in $\alpha$ except its right-most site. The induction on the number of balls in $(\beta_2,...,\beta_m)$ is similar. The base case of zero balls is direct.
    
    We write $\delta = (\alpha_2,...,\alpha_{\ell-1})$ and $\cst = n-\ell-m-1$. We then define the configurations $L = (\alpha_1,\delta,\alpha_\ell,0,\beta)$, $M = (\alpha_1-1,\delta,\alpha_\ell,1,\beta)$ and $R = (\alpha_1~-~1, \delta, \alpha_\ell+1, 0, \beta)$. Our induction step will aim to compute $P_L$, given $P_R$. We distinguish two cases.
    
    If $\alpha_1 > 1$, the induction step we have to prove simplifies down to $q^{\ell-1}P_L = [\ell] P_R$. We then remark that, for any $j$, the following holds: $q^{\ell-1}[j+1]-[\ell][j+\ell] = - [\ell-1][j+\ell+1]$. Unfolding the definition of $P_L$ and $P_R,$ we have:

\begin{align*}
    q^{\ell-1}P_{L} - [\ell] P_{R} &= (t;q)_{n+1}\sum_{j \geq 0}{t^j (q^{\ell-1}[j+1]-[\ell][j+\ell]) \prod_{d \in \ms(\alpha_1-1,\delta,\alpha_\ell,0,\beta)}{[j+d]}} \\
    &+ \sum_{i=0}^{\cst}{t^i (-q^{\ell-1}A_{(0^i,L,0^{\cst-i})} + [\ell] A_{(0^i,R,0^{\cst-i})})} \\
    &= -[\ell-1](t;q)_{n+1}\sum_{j \geq 0}{t^j [j+\ell+1] \prod_{d \in \ms(\alpha_1-1,\delta,\alpha_\ell,0,\beta)}{[j+d]}} \\
    &+ [\ell] \sum_{i=0}^{\cst}{t^i \left( A_{(0^i,R,0^{\cst-i})} -q^{\ell-1} \frac{1}{[\ell]} A_{(0^i,L,0^{\cst-i})} \right) } \\
    &= [\ell] \sum_{i=0}^{\cst}{t^i \left( A_{(0^i,R,0^{\cst-i})} - q^{\ell-1}\frac{1}{[\ell]} A_{(0^i,L,0^{\cst-i})} - \frac{[\ell-1]}{[\ell]}A_{(0^i,M,0^{\cst-i})} \right) } \\
    &= 0.
\end{align*}

We obtain the second equality from Theorem \ref{th:acq_cnx} over the connected configuration M, and the final one from Equality \eqref{eq:big_step} on each $(0^i,R,0^{\cst-i})$.
\medskip

    In the case $\alpha_1 = 1$, we need to prove $tq^{\ell-1} P_L = [\ell] P_R$. We first repeat the above computations, leaving us only to show that
$P_{(0,\delta,f+1,0,\beta)}$ is indeed equal to $t^{-1} P_{(\delta,f+1,0,\beta)}$. We thus unfold the definition of $P_{(\delta,f+1,0,\beta)}$:
\begin{align*}
    t^{-1}P_{(\delta,f+1,0,\beta)} &= (t;q)_{n+1} \sum_{j \geq 0}{t^{j-1}} \prod_{d \in \ms(\delta,f+1,0,\beta)}{[j+d]} - \sum_{i = 0}^{\cst}{t^{i-1}A_{(0^i,\delta,f+1,0,\beta,0^{\cst-i})}} \\
    &= (t;q)_{n+1} \sum_{j \geq 1}{t^{j-1}} \prod_{d \in \ms(\delta,f+1,0,\beta)}{[j+d]} - \sum_{i = 1}^{\cst+1}{t^{i-1}A_{(0^{i},0,\delta,f+1,0,\beta,0^{\cst-i})}} \\
    &+ t^{-1}\prod_{d \in \ms(\delta,f+1,0,\beta)}{[d]} - t^{-1}A_{(\delta,f+1,0,\beta,0^{\cst})} \\
    &= (t;q)_{n+1} \sum_{j \geq 1}{t^{j-1}} \prod_{d \in \ms(0,\delta,f+1,0,\beta)}{[j+d-1]} - \sum_{i = 1}^{\cst+1}{t^{i-1}A_{(0^{i},0,\delta,f+1,0,\beta,0^{\cst-i})}} \\
    &= P_{(0,\delta,f+1,0,\beta)}.
\end{align*}
We use Proposition \ref{pr:acq_luka} on the {\L}ukasiewicz configuration $(\delta,f+1,0,\beta,0^{cst})$ to prove the third equality; the final one is obtained through an index shift.
\end{proof}

Thus for fixed $p$ and $r$, we only have to prove Theorem \ref{th:one_hole} for a single pair $(\alpha,\beta).$

\begin{lemma}
\label{lm:correctif_power_minus_bin}
For all $p \geq 2$ and $r \geq 1$, we have:
\begin{equation*}
A_{(0^{p-2},p,0,1^{r-1})} = q^{\frac{p(p-3)}{2}}[r-1]! \left( [r+1]^p-\qbin{p+r}{r} \right) .
\end{equation*}
\end{lemma}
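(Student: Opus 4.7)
My plan will be to reduce to the almost Łukasiewicz case by reversing the configuration. Set $\overline{c} = (1^{r-1}, 0, p, 0^{p-2})$. Computing its heights shows that $H_{\overline{c}, i} = 0$ for $i < r$ (singletons), then drops to $H_{\overline{c}, r} = -1$ (the hole), jumps up to $H_{\overline{c}, r+1} = p - 2$ (the column of $p$ balls), and then steadily decreases back to $0$ at site $n = p + r - 1$. Exactly one height is negative, so $\overline{c}$ is almost Łukasiewicz with defect $j = r$, and Theorem \ref{th:alm_luka} applies directly.

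The multiset $\ms(\overline{c})$ consists of $1, 2, \ldots, r-1$ together with $p$ copies of $r+1$. The two products appearing in Theorem \ref{th:alm_luka} are then $\prod_{a \in \ms(\overline{c})}[a] = [r-1]!\,[r+1]^p$ and $\prod_{a<r}[a]\prod_{b>r}[b-r] = [r-1]! \cdot [1]^p = [r-1]!$, so the theorem yields
\[A_{\overline{c}}(q) = [r-1]!\left([r+1]^p - \qbin{p+r}{r}\right).\]

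Finally, I would apply Proposition \ref{pr:reverse}: $A_c(q) = q^{\binom{n}{2}} A_{\overline{c}}(1/q)$ with $n = p+r-1$. Using the standard $1/q$-identities $[k]!_{1/q} = q^{-\binom{k}{2}}[k]!$, $[k]_{1/q} = q^{-(k-1)}[k]$, and $\qbin{a}{b}_{1/q} = q^{-b(a-b)}\qbin{a}{b}$, the key observation is that both $[r+1]^p$ and $\qbin{p+r}{r}$ pick up the \emph{same} factor $q^{-rp}$ under $q \mapsto 1/q$; this uniformity is what allows the bracket $[r+1]^p - \qbin{p+r}{r}$ to survive intact, multiplied by a single power of $q$ coming from $q^{\binom{n}{2}}$, the $q^{-\binom{r-1}{2}}$ from $[r-1]!$, and the $q^{-rp}$ shared by both bracket terms. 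A short computation reduces the total exponent $\binom{p+r-1}{2} - \binom{r-1}{2} - rp$ to $p(p-3)/2$ (via $\binom{p+r-1}{2} - \binom{r-1}{2} = p(p+2r-3)/2$), yielding the claim. The main obstacle is really just spotting the reversal trick; the uniform transformation of the two terms under $q \mapsto 1/q$ is fortunate but routine to check, and without it the compact closed form would not appear.
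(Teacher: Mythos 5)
Your proposal is correct and follows exactly the paper's own argument: reverse to $(1^{r-1},0,p,0^{p-2})$, recognise it as almost {\L}ukasiewicz with defect $r$, apply Theorem \ref{th:alm_luka}, and then transfer back via Proposition \ref{pr:reverse} using the standard $q \mapsto 1/q$ identities. The exponent bookkeeping (both bracket terms acquiring $q^{-rp}$, and $\binom{p+r-1}{2}-\binom{r-1}{2}-rp = p(p-3)/2$) matches the paper's computation.
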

\begin{proof}
    We first notice that the reverse configuration $(1^{r-1},0,p,0^{p-2})$ is almost {\L}ukasiewicz, of defect $r$. Therefore, thanks to Theorem \ref{th:alm_luka} over almost {\L}ukasiewciz configurations, we have: $$A_{(1^{r-1},0,p,0^{p-2})} = [r+1]^p [r-1]! - \qbin{p+r}{r} [r-1]!.$$
    
    We then recall Proposition \ref{pr:reverse} over reverse configurations, and the identity $[i]_q = q^{-i+1}[i]_{1/q},$ giving us:
    \begin{align*}
    A_{(0^{p-2},p,0,1^{r-1})} &= q^{\binom{p+r-1}{2}} A_{(1^{r-1},0,p,0^{p-2})}(1/q) \\
    &= q^{\binom{p+r-1}{2}} q^{-\binom{r-1}{2}} [r-1]! \left( q^{-rp} [r+1]^p - q^{-\binom{p+r}{2}+\binom{r}{2}+\binom{p}{2}}\qbin{p+r}{r} \right)\\
    &= q^{\frac{p(p-3)}{2}}[r-1]! \left( [r+1]^p-\qbin{p+r}{r} \right).
    \end{align*}
\end{proof}

We can now prove Theorem \ref{th:one_hole}. We start by using Lemma \ref{lm:1hole_2cols_enough}, to assume w.l.o.g. that $\gamma = (p,0,1^r)$. We then compute $P_{(p,0,1^r)}$ by induction over $r$, starting by the use of the equality $[j+r+2] = [r] + q^r [j+2]$:
\begin{align*}
    P_{(p,0,1^r)} &= (t;q)_{p+r+1} \sum_{j \geq 0}{t^j [j+1]^p \prod_{a = 3}^{r+2}{[j+a]}} - \sum_{i = 0}^{p-2}{t^i A_{(0^i,p,0,1^r,0^{p-2-i})}} \\
    &= [r] (t;q)_{p+r+1} \sum_{j \geq 0}{t^j [j+1]^p} \prod_{a = 3}^{r+1}{[j+a]} + q^r (t;q)_{p+r+1} \sum_{j \geq 0}{[j+1]^p} \prod_{a = 2}^{r+1}{[j+a]} \\
    &- \sum_{i = 0}^{p-2}{t^i A_{(0^i,p,0,1^r,0^{p-2-i})}}. 
\end{align*}
The sub-configuration $(p,1^r)$ has no hole, so we can use Theorem \ref{th:acq_cnx} for connected configurations in the second term of this sum. As for the first term, we rewrite it using the definition of $P_{(p,0,1^r)}:$
\begin{align*}
    P_{(p,0,1^r)} &= - \sum_{i = 0}^{p-2}{t^i A_{(0^i,p,0,1^r,0^{p-2-i})}} + q^r \sum_{i = 0}^{p-1}{t^i A_{(0^i,p,1^r,0^{p-1-i})}} + [r](1-tq^{p+r}) P_{(p,0,1^{r-1})} \\
    &+ [r] \sum_{i = 0}^{p-2}{t^i A_{(0^i,p,0,1^{r-1},0^{p-2-i})}} - [r]q^{p+r} \sum_{i = 1}^{p-1}{t^i A_{(0^{i-1},p,0,1^{r-1},0^{p-1-i})}} \\
    &= -A_{(p,0,1^r,0^{p-2})} + q^r A_{(p,1^r,0^{p-1})} + [r]A_{(p,0,1^{r-1},0^{p-2})} \\
    &+ \sum_{i = 1}^{p-2}{t^i(q^r A_{(0^i,p,1^r,0^{p-1-i})} + [r] A_{(0^i,p,0,1^{r-1},0^{p-2-i})} - [r]q^{p+r} A_{(0^{i-1},p,0,1^{r-1},0^{p-1-i})} - A_{(0^i,p,0,1^r,0^{p-2-i})} )} \\
    &+ t^{p-1}(q^r A_{(0^{p-1},p,1^r)} - [r] q^{p+r} A_{(0^{p-2},p,0,1^{r-1})}) + [r](1-tq^{p+r})P_{(p,0,1^{r-1})}.
    \end{align*}

In that last expression, it turns out that the first two lines are zero. Indeed, using Theorem \ref{th:w_luka} over weakly {\L}ukasiewicz configurations on $(0^{p-2},p,0,1^r)$, we know that $P_{(p,0,1^r)}$ is divisible by $t^{p-1}$. We thus have, using Lemma \ref{lm:correctif_power_minus_bin}:
    \begin{align*}
    P_{(p,0,1^r)} &= t^{p-1}q^r \left( q^{\binom{p}{2}} [r+1]^p [r]! - [r]q^p q^{\frac{p(p-3)}{2}}[r-1]! \left( [r+1]^p-\qbin{p+r}{r} \right) \right) \\
    &+ [r](1-tq^{p+r})P_{(p,0,1^{r-1})} \\
    &= t^{p-1}q^{r+\binom{p}{2}}[r]!\qbin{p+r}{r} + [r](1-tq^{p+r})P_{(p,0,1^{r-1})}.
    \end{align*}

Another application of Lemma \ref{lm:1hole_2cols_enough} gives us $[r]! P_{(p,0,r)} = P_{(p,0,1^r)}$. We normalise $P_{(p,0,r)}$ as the polynomial $$Q_r := t^{-p+1}q^{-\binom{p}{2}} P_{(p,0,r)}.$$
The above equality then simplifies down to: $$Q_r = q^{r}\qbin{p+r}{r} + (1-tq^{p+r})Q_{r-1}.$$ Furthermore, $P_{(p,0,0)} = P_{(p)}$, which we know to be $0$ thanks to Theorem \ref{th:acq_cnx} for connected configurations, applied on $\beta = (p)$. Therefore, $Q_0 = 0$.
\medskip

In order for Theorem \ref{th:one_hole} to be true, we need $Q_r$ to be equal to $$\Tilde{Q}_r := \sum_{i=0}^r{(-t)^i q^{pi + \binom{i+1}{2}} \qbin{p+r+1}{r-i}}.$$ We see that $\Tilde{Q}_0$ is indeed $Q_0$, i.e. $0$. Moreover, we check that $\Tilde{Q}_r$ satisfies the same induction relation:
    \begin{align*}
    q^r \qbin{p+r}{r}+(1-tq^{p+r})\Tilde{Q}_{r-1} &= q^r \qbin{p+r}{r} + \sum_{i=0}^{r-1}{(-t)^i q^{pi+\binom{i+1}{2}} \qbin{p+r}{r-i-1}} \\
    &+ \sum_{i=0}^{r-1}{(-t)^{i+1} q^{p(i+1)+\binom{i+1}{2}+r} \qbin{p+r}{r-(i+1)}} \\
    &= q^r \qbin{p+r}{r} + \qbin{p+r}{r-1} + (-t)^r q^{pr+\binom{r+1}{2}} \\
    &+ \sum_{i=1}^{r-1}{(-t)^i q^{pi+\binom{i+1}{2}} \left( \qbin{p+r}{r-i-1} + q^{r-i} \qbin{p+r}{r-i} \right) } \\
    &= \Tilde{Q}_r, \text{ QED}.
    \end{align*}
We thus have $Q_r = \Tilde{Q}_r = \sum_{i=0}^r{(-t)^i q^{pi + \frac{(i+1)i}{2}} \qbin{p+r+1}{r-i}}$, which concludes the proof.


\printbibliography

\end{document}